\setlist[enumerate]{itemsep=0mm}
\setlist[itemize]{itemsep=0mm}
\newtheorem{thm-intro}{Theorem}
\newtheorem{prop}{Proposition}[section]
\newtheorem{lemma}[prop]{Lemma}
\newtheorem{thm}[prop]{Théorème}
\newtheorem{theorem}[prop]{Theorem}
\theoremstyle{definition}
\newtheorem{definition}[prop]{Definition}
\newtheorem{prop-def}[prop]{Proposition and Definition}
\newtheorem{thm-def}[prop]{Theorem and Definition}
\newtheorem{example}[prop]{Example}
\newtheorem{rmk}[prop]{Remark}
\renewcommand{\Im}{\mathrm{Im\,}}
\newcommand{\Isom}{\mathrm{Isom\,}}
\newcommand{\Pic}{\mathrm{Pic\,}}
\DeclareMathOperator{\Id}{Id}
\newcommand{\N}{\mathbb N}
\newcommand{\Z}{\mathbb Z}
\newcommand{\Q}{\mathbb Q}
\newcommand{\R}{\mathbb R}
\newcommand{\C}{\mathbb C} %\newcommand{\C}[1]{\mathbb{C}^{#1}}
\renewcommand{\hat}{\widehat}
\renewcommand{\tilde}{\widetilde}
\renewcommand{\emptyset}{\varnothing}
\renewcommand{\phi}{\varphi}
\renewcommand{\bar}{\overline}
\renewcommand{\P}{\mathbb P}
\newcommand{\Aut}{\mathrm{Aut\,}}
\renewcommand{\O}{\mathrm{O}}
\newcommand{\intl}{[\![}
\newcommand{\intr}{]\!]}
\renewcommand{\phi}{\varphi}
\begin{document}
\title[Real structures on KLT Calabi-Yau pairs]{Finiteness of real structures on\\ KLT Calabi-Yau regular smooth pairs of dimension 2}
\author{Mohamed Benzerga}
\address{Universit\'e d'Angers, \textsc{Larema}, UMR CNRS 6093, 2, boulevard Lavoisier, 49045 Angers cedex 01, France}
\email{mohamed.benzerga@univ-angers.fr}
\noindent
\subjclass{14J26, 14J50, 14P05, 20F67} %À vérifier
\keywords{Rational surfaces, real structures, real forms, KLT Calabi-Yau pairs, CAT(0) metric spaces.}%À vérifier

\bibliographystyle{amsalpha} %amsalpha  alphadin  alpha  amsrefs cparalleless
\begin{abstract}
In this article, we prove that a smooth projective complex surface $X$ which is regular (i.e. such that $h^1(X,\mathcal O_X)=0$) and which has a $\R$-divisor $\Delta$ such that $(X,\Delta)$ is a KLT Calabi-Yau pair has finitely many real forms up to isomorphism. For this purpose, we construct a complete CAT(0) metric space on which $\Aut X$ acts properly discontinuously and cocompactly by isometries, using Totaro's Cone Theorem. Then we give an example of a smooth rational surface with finitely many real forms but having a so large automorphism group that \cite{mon-papier} does not predict this finiteness.
\end{abstract}
\renewcommand{\subjclassname}
{\textup{2010} Mathematics Subject Classification}
 
\maketitle

\section*{Introduction}%\label{section-horoboules}
%Totaro (avec CAT(0)) puis la Coble de Dolgachev.\\
%Main reference: \cite{totaro}.
\indent A \textit{real form} of a complex projective variety $X$ is a scheme over $\R$ whose complexification is $\C$-isomorphic to $X$. A \textit{real structure} on $X$ is an antiregular (or antiholomorphic) involution $\sigma : X\to X$ (cf. \cite[Chap. 2]{le-bouquin}). Two real structures $\sigma$ and $\sigma'$ are \textit{equivalent} if there is a $\C$-automorphism $\phi$ of $X$ such that $\sigma'=\phi\sigma\phi^{-1}$.\\%this is an important notion in real algebraic geometry, 
\indent By Weil descent of the base field (cf. \cite[III.\S 1.3]{serre-cohgal-english}), there is a bijective correspondence between the set of $\R$-isomorphism classes of real forms of $X$ and the set of equivalence classes of real structures on $X$. Moreover, if $\sigma$ is a real structure on $X$, this set is parametrized by the \textit{first Galois cohomology set} $H^1(G,\Aut_{\C}X)$, where $G=\langle\sigma\rangle$ acts on $\Aut_{\C}X$ by conjugation.\\
\indent The results of this paper are motivated by the study of the finiteness problem for real forms of rational surfaces. We already addressed this question in our previous article \cite{mon-papier} whose main result, combined with [\textit{loc. cit.} \S 3.2], states as follows:
\begin{thm-intro}\label{first-thm} Let $X$ be a smooth complex rational surfaces and let $\Aut^*X$ be the image of the natural morphism $\Aut X\to \O(\Pic X)$.\\
If $\Aut^*X$ does not contain a non-abelian free group $\Z*\Z$ then $X$ has finitely many real forms up to $\R$-isomorphism.\end{thm-intro}
However, this result does not completely solve the problem since there are rational surfaces whose automorphism group does contain a non-abelian free group (cf. Example \ref{example-12-points}). In fact, it is not known how $\Aut^*X$ can be large for a rational surface $X$. For example, up to our knowledge, the problem of the finite generation of the group $\Aut^*X$ is open (but Lesieutre constructed in \cite{lesieutre} a six-dimensional variety $X$ such that $\Aut^*X$ is not finitely generated and he showed that $X$ is an example of a smooth projective variety having infinitely many non-isomorphic real forms\footnote{To the best of our knowledge, it is the first known example.}).\\
\indent The aim of this article is to prove the following result:
\begin{thm-intro}\label{main-thm} Let $X$ be a smooth projective complex surface which is regular (i.e. $q(X) := h^1(X,\mathcal O_X) = 0$).\\
If there is a $\R$-divisor $\Delta$ on $X$ such that $(X,\Delta)$ is a KLT Calabi-Yau pair\footnote{See Definition \ref{def-klt-cy}}, then $X$ has finitely many real forms up to $\R$-isomorphism.\end{thm-intro}
The proof of this result uses different kind of tools than those we used in \cite{mon-papier} and mostly geometric actions on complete CAT(0) metric spaces: roughly speaking, a metric space is CAT(0) if it has "nonpositive curvature". We will give precise definitions in section \ref{section-cat0}. Then, we recall the definition of KLT Calabi-Yau pairs and we prove finiteness of real forms for them using Totaro's Cone Theorem \ref{cone-thm}. We give an example of a rational surface whose finiteness of real forms cannot be deduced from Theorem \ref{first-thm} but is obtained from Theorem \ref{thm-finiteness-klt}. Finally, we present an example of a rational surface for which the finiteness problem remains open and which can be equipped of a $\Q$-divisor $\Delta$ such that $(X,\Delta)$ is log-canonical Calabi-Yau. %Finally, we show that if $X$ is a rational surface, then geometric actions of $\Aut^*X$ on CAT(0) complete spaces (a wide class of metric spaces containing hyperbolic spaces) again give finiteness of real forms on $X$.

\section{Preliminaries: Geometric actions on CAT(0) spaces} \label{section-cat0}
%We conclude this section by another finiteness result in a similar spirit than Theorem \ref{thm-finiteness-klt} (in fact, we wanted to use these ideas to prove it, following a non-proven claim in \cite[\S 6]{totaro2} according to which the automorphism group of a KLT Calabi-Yau pair satisfies the hypothesis of Corollary \ref{thm-finiteness-cat(0)}) ; this needs some other definitions :
%\attention\, Définir CAT(0) + énoncer théorème finitude classes de conjugaison sous-groupes finis (réf. \cite[II.2.8]{bridson-haefliger}).
We begin this section by a brief explanation of the link between finiteness of real forms and geometric (i.e. proper and cocompact) actions on CAT(0) spaces, which are a generalization of manifolds with nonpositive curvature (see \cite[I.1.3, II.1.1]{bridson-haefliger}): this will be our main tool in order to turn our finiteness problem into a problem of hyperbolic geometry.

\begin{definition} \label{def-cat0}
\begin{itemize}
\item A \textbf{geodesic} between two points $a$ and $b$ in a metric space $(X,d)$ is a map $\gamma : [0,\ell]\to X$ such that $\gamma(0)=a$, $\gamma(\ell)=b$ and $\forall t,t'\in[0,\ell],\;d(\gamma(t),\gamma(t'))=|t-t'|$ (in particular, $\gamma$ is continuous and $\ell = d(a,b)$). A \textbf{geodesic triangle} $\Delta$ in $X$ consists of three points $x,y,z\in X$ and three geodesic segments $[x, y], [y, z], [z, x]$.
\item A metric space $(X,d)$ is \textbf{geodesic} if every two points in $X$ are joined by a geodesic (not necessarily unique).
\item A geodesic metric space $(X,d)$ is said to be a $\mathbf{\operatorname{\textbf{CAT}}(0)}$ \textbf{space} if for every geodesic triangle $\Delta$ in $X$, there exists a triangle $\Delta'$ in $\R^n$ endowed with the euclidean metric, with sides of the same length as the sides of $\Delta$, such that distances between points on $\Delta$ are less than or equal to the distances between corresponding points on $\Delta'$. \index{CAT(0) space}
\item \emph{\cite[I.8.2]{bridson-haefliger}\footnote{As explained in \textit{op.cit.}, I.8.3, if every closed ball of $X$ is compact, then this definition is equivalent to the standard definition where the open balls are replaced by the compact subsets of $X$: for us, this is always the case.}} Let $\Gamma$ be a group acting by isometries on a metric space $X$. The action is said to be \textbf{proper} (or \textbf{properly discontinuous}) if\footnote{Denoting by $B(x,r) = \{ y\in X | d(x,y) <r\}$ the open ball of center $x\in X$ and radius $r\geq 0$.}
$$\forall x\in X,\; \exists r > 0 ,\;\{\gamma\in\Gamma|\gamma .B(x, r) \cap B(x, r) \neq \emptyset\} \text{ is finite}.$$
\end{itemize}
\end{definition}
\begin{center}
\includegraphics[scale=0.35]{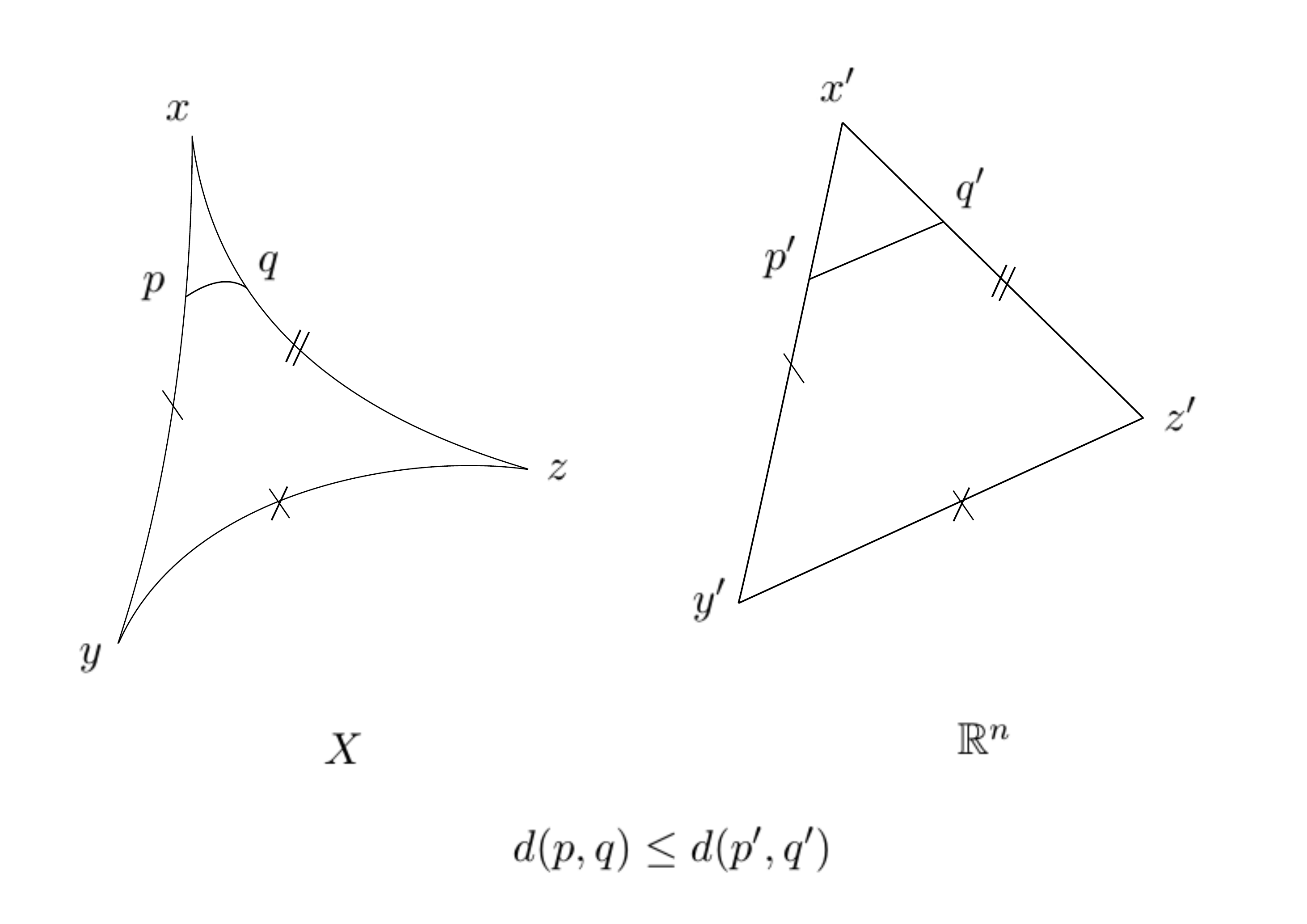}%0.35
\end{center}

\begin{theorem} {\bf (\cite[II.2.8]{bridson-haefliger})} \label{conj-classes-cat0}\\ 
If a group $\Gamma$ acts geometrically (i.e. properly discontinuously and cocompactly by isometries) on a complete CAT(0) space, then $\Gamma$ contains only finitely many conjugacy classes of finite subgroups.\qed
\end{theorem}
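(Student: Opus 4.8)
The plan is to deduce the statement from the Bruhat--Tits fixed point theorem together with the finiteness built into a proper cocompact action, thereby reducing everything to the finiteness of a single bounded subset of $\Gamma$. The one genuinely geometric ingredient --- and the step I expect to be the main obstacle in a self-contained treatment, although it is by now entirely standard --- is the assertion that every finite subgroup $F\le\Gamma$ has a fixed point in $X$. To prove it, pick any $y\in X$; the orbit $B=F\cdot y$ is finite, hence bounded, and in a complete CAT(0) space a bounded set $B$ possesses a unique circumcenter $c(B)$, namely the unique point minimizing the radius function $x\mapsto\sup_{b\in B}d(x,b)$. Existence of a minimizer comes from completeness (a minimizing sequence is Cauchy, using the CAT(0) inequality at midpoints of its terms), and uniqueness comes from the same inequality: if two distinct points realized the minimal radius, the midpoint of the geodesic joining them would have strictly smaller radius. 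Uniqueness forces $c$ to commute with isometries, so $c(F\cdot y)$ is fixed by $F$.

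Next I would fix a basepoint $x_0\in X$. Cocompactness of the action furnishes $R>0$ with $\Gamma\cdot\bar B(x_0,R)=X$. Given a finite subgroup $F\le\Gamma$, choose a point $p$ fixed by $F$, then $\gamma\in\Gamma$ with $q:=\gamma^{-1}p\in\bar B(x_0,R)$; the conjugate $\gamma^{-1}F\gamma$ fixes $q$. Hence every finite subgroup of $\Gamma$ is conjugate to one belonging to the family $\mathcal S$ of finite subgroups of $\Gamma$ that fix some point of the compact ball $\bar B(x_0,R)$, and it suffices to prove $\mathcal S$ finite.

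Finally, suppose $F'\le\Gamma$ fixes $q\in\bar B(x_0,R)$. For each $f\in F'$, since $f$ is an isometry and $f^{-1}\in F'$ also fixes $q$, one gets $d(x_0,fx_0)\le d(x_0,q)+d(q,fx_0)=d(x_0,q)+d(f^{-1}q,x_0)=2\,d(x_0,q)\le 2R$, so $fx_0$ lies in $\bar B(x_0,2R)\cap f\bar B(x_0,2R)$ and therefore $f\in S:=\{\gamma\in\Gamma:\gamma\bar B(x_0,2R)\cap\bar B(x_0,2R)\neq\emptyset\}$. Since the closed balls of $X$ are compact, properness of the action forces $S$ to be finite; consequently every member of $\mathcal S$ is one of the finitely many subsets of $S$, so $\mathcal S$ is finite. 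Combined with the preceding paragraph, this shows that $\Gamma$ has only finitely many conjugacy classes of finite subgroups, as claimed.
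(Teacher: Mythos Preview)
The paper does not give its own proof of this theorem: the statement is quoted from \cite[II.2.8]{bridson-haefliger} and closed immediately with a \qed, so there is nothing in the paper to compare your argument against. Your proof is correct and is exactly the argument in Bridson--Haefliger: the Bruhat--Tits circumcenter lemma gives each finite $F\le\Gamma$ a fixed point, cocompactness lets you conjugate $F$ so that its fixed point lies in a fixed compact set $K$, and then every such conjugate is contained in the finite set $\{\gamma\in\Gamma:\gamma K\cap K\neq\emptyset\}$ furnished by properness. The only step you invoke without justification is that closed balls in $X$ are compact; this is not literally among the stated hypotheses, but the paper's footnote to Definition~\ref{def-cat0} already records that it holds in every situation considered here, and in any case it follows from the hypotheses (a complete length space admitting a proper cocompact isometric action is proper, cf.\ \cite[I.8.4, I.3.7]{bridson-haefliger}).
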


\section{Finiteness theorem}

Firstly, let us introduce the surfaces we deal with (cf. \cite{totaro}, \cite[8.2,8.4]{totaro2} for a slightly more general definition):
\begin{definition} {\bf (KLT Calabi-Yau pair)} \label{def-klt-cy} \index{KLT Calabi-Yau pair}\\
Let $X$ be a \textit{smooth} projective complex variety and $\Delta$ be a $\R$-divisor on $X$.\\
$(X,\Delta)$ is \textbf{a KLT (\textit{resp.} log-canonical) Calabi-Yau pair} if there exists a resolution $\pi : (\tilde{X},\tilde{\Delta})\to (X,\Delta)$ satisfying the following conditions:
\begin{itemize}
\item $K_{\tilde{X}}+\tilde{\Delta} = \pi^*(K_X+\Delta)$;
\item $\tilde{\Delta}$ has simple normal crossings and his coefficients are $< 1$ (KLT condition), \textit{resp.} $\leq 1$ (log-canonical condition);
\item $\Delta$ is an effective $\R$-divisor such that $K_X+\Delta$ is numerically trivial (Calabi-Yau condition).
\end{itemize}
\end{definition}

\begin{example} \label{example-klt} Let us present here some examples of KLT Calabi-Yau pairs. The reader may also look at Examples \ref{example-12-points} and \ref{exemple-dolgachev}.
\begin{itemize}
\item Of course, there are irrational surfaces $X$ having a $\R$-divisor $\Delta$ such that $(X,\Delta)$ is KLT Calabi-Yau (simply think of $X$ being Calabi-Yau smooth and $\Delta=0$). There are less trivial examples, like some $\P^1$-bundles over elliptic curves (cf. \cite[1.4]{alexeev-mori}).
\item If $X$ is a \textit{Halphen surface} of index $m\geq 2$ (for the definition, cf. \cite[\S 2]{cantat-dolgachev}) and $F$ a reduced fibre of the elliptic fibration on $X$ with simple normal crossings, then $\left(X,\dfrac{1}{m}F \right)$ is a KLT Calabi-Yau pair: for, the definition of a Halphen surface shows that $F\sim -mK_X$ and $\dfrac{1}{m}<1$. If $X$ is of index 1, then $\left(X,\dfrac{1}{2}(F+F')\right)$ is a KLT Calabi-Yau pair for two distinct reduced smooth fibers $F$ and $F'$ of $X\to\P^1$.
\item Similarly, if $X$ is a \textit{Coble surface} and if the special fibre $F$ (see \cite[Prop. 3.1]{cantat-dolgachev}) is reduced and has simple normal crossings, then $\left(X,\dfrac{1}{2}F \right)$ is a KLT Calabi-Yau pair.
\end{itemize}
\end{example}
%\begin{definition} \index{Rational polyhedral cone}
%Let $X$ be a smooth complex algebraic variety.\\
%A cone in $N^1(X)$ is \textbf{rational polyhedral} if it is the closed convex cone spanned by a\textit{ finite} set of Cartier divisors on X.
%\end{definition} 

For our purposes, we need the following finiteness theorem:
\begin{theorem} {\bf (Cone theorem for KLT Calabi-Yau pairs - \cite[8.7]{totaro2})}\label{cone-thm}\index{KLT Calabi-Yau! Cone theorem for}\index{Cone theorem}\\
Let $(X,\Delta)$ be a KLT Calabi-Yau pair.\\
If $X$ is a surface, then the action of $\Aut X$ on the nef cone has a rational polyhedral fundamental domain (i.e. it is the closed convex cone spanned by a \emph{finite} set of Cartier divisors in $\Pic X\otimes_{\Z}\R$).\qed
\end{theorem}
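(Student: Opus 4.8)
The plan is to deduce the statement from the Hodge index theorem together with the Enriques classification of surfaces, the central leverage being that every automorphism of $X$ fixes $K_X$, and hence, since $K_X+\Delta\equiv 0$, fixes the class $[-K_X]=[\Delta]$ in the Néron–Severi space $N^1(X)_{\R}$. I would equip $N^1(X)_{\R}$ with the intersection form, of signature $(1,\rho-1)$, let $\mathcal C^+$ be the connected component of $\{D^2>0\}$ containing ample classes (so $\mathrm{Nef}(X)\subseteq\overline{\mathcal C^+}$), and let $\Gamma$ be the image of $\Aut X\to \mathrm{O}(\mathrm{NS}(X))$, a discrete group of lattice isometries preserving $\mathcal C^+$ and $\mathrm{Nef}(X)$ and fixing $[-K_X]$. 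Since the pseudo-effective cone $\overline{\mathrm{Eff}}(X)$ is salient (it lies in the half-space $\{D\cdot H\geq 0\}$ for any ample $H$) and contains both $-K_X$, which is effective, and $K_X$ whenever $\kappa(X)\geq 0$, exactly one of two things occurs: either $K_X\equiv 0$ — which forces $\Delta=0$ and $X$ minimal, hence $X$ abelian, bielliptic, Enriques or K3 — or $\kappa(X)=-\infty$, so $X$ is uniruled; in the latter case, because $-K_X$ is effective and restricts to $\mathcal O(2)$ on a general ruling, the base curve has genus $\le 1$, so $X$ is rational or birational to a $\P^1$-bundle over an elliptic curve.

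For the non-rational alternatives the conclusion is precisely the Morrison–Kawamata cone conjecture in cases where it is already a theorem (Sterk for K3, and the analogous results for abelian, bielliptic and Enriques surfaces), and the $\P^1$-bundle-over-an-elliptic-curve case is a bounded family treated by the same equivariant-fibration methods; I would simply invoke these. So the real content is the case $X$ rational, which I would split according to $(-K_X)^2=K_X^2$.

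If $K_X^2>0$, then $-K_X\equiv\Delta$ is big, $\rho(X)\leq 9$, and $-K_X$ lies in the interior of $\mathcal C^+$; since $\Gamma$ fixes this interior point it acts faithfully through the isometry group of the negative-definite lattice $\mathrm{NS}(X)\cap (-K_X)^\perp$, hence $\Gamma$ is finite, and it is known that a rational surface with big anticanonical class has only finitely many negative curves and rational polyhedral nef cone (bound $C^2\geq -2$ against the big class $-K_X$), which settles this subcase. If $K_X^2=0$ with $K_X\not\equiv 0$, then the Zariski decomposition of the effective class $-K_X$ forces it to be nef; some $|-mK_X|$ is base-point-free and defines a genus-one (Halphen-type) fibration $f:X\to\P^1$ whose general fibre is numerically proportional to $-K_X$, and, automorphisms fixing $-K_X$, the group $\Gamma$ preserves $f$. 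Its structure is then: the $(-2)$-curves in the reducible fibres generate a finite reflection subgroup, while the Mordell–Weil group acts by translations, i.e.\ by parabolic isometries fixing the isotropic ray $\R_{\geq 0}[-K_X]$. A rational polyhedral fundamental domain for such a group — a finite Weyl group $\ltimes$ lattice of translations acting on a cone with an invariant isotropic vector — is produced by a Looijenga-type argument, which I would carry out for the nef (effective) cone of $X$.

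The hard part is this last, isotropic case: one must understand precisely how the finite reflection group interacts with the (possibly positive-rank) lattice of Mordell–Weil translations, and check that the combined group still tiles the nef cone by finitely many rational polyhedral chambers, in particular near the parabolic point at infinity determined by $[-K_X]$. Totaro bypasses much of the surface-by-surface bookkeeping by instead running the $K_X+(1-\varepsilon)\Delta$ minimal model program and applying the cone theorem for klt pairs to the resulting negative part — the \emph{klt}, as opposed to merely log-canonical, hypothesis being exactly what makes that machinery available — but the behaviour along the isotropic boundary ray remains the crux in any approach.
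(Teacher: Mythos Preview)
The paper does not prove this theorem at all: it is stated with a trailing \qed\ and attributed to Totaro \cite[8.7]{totaro2}, and is used throughout as a black box. So there is nothing to compare your argument against --- the paper's ``proof'' is a citation.

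That said, a word on your sketch. The overall case split (minimal with $K_X\equiv 0$; ruled over an elliptic curve; rational) is reasonable, and for the first two you are right that one is simply invoking known instances of the Morrison--Kawamata cone conjecture. But your treatment of the rational case is incomplete: you only discuss $K_X^2>0$ and $K_X^2=0$, while the case $K_X^2<0$ is not mentioned. This is exactly the case that matters for the paper's applications --- Example~\ref{example-12-points} is a blow-up of $\P^2$ at $12$ points, so $K_X^2=-3$, and $-K_X$ is effective but neither big nor nef nor of square zero. In that regime there is no Halphen fibration with fibre class proportional to $-K_X$, and the Mordell--Weil/Weyl description you outline does not apply directly; one really needs the MMP argument you allude to at the end (run a $(K_X+(1-\e)\Delta)$-MMP, use the klt cone theorem to control the extremal rays, and patch chambers), which is what Totaro does. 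So the ``hard part'' is not only the isotropic boundary behaviour in the $K_X^2=0$ case; the $K_X^2<0$ case is an entirely separate, and for this paper the most relevant, regime that your outline does not reach.
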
 %\cite[4.1]{totaro}
%
% Soient $\mathcal H^n$ l'espace hyperbolique de dimension $n$, $N$ un convexe fermé de $\mathcal H^n$, $\Gamma$ un groupe discret d'isométries de $\mathcal H^n$ qui stabilisent $N$ et $P$ le polyèdre fondamental de l'action de $\Gamma$ sur $\mathcal H^n$. On suppose que $P$ est fini (="finite-sided"), i.e. a un nombre fini de faces.\\ 
%\indent Rappelons que dans notre cas ($X$ klt Calabi-Yau...), $\mathcal H^n = \{D\in \mathrm{Pic} X\otimes_{\Z}\R|\;D^2>0\}/\R^*$, $N=\mathrm{Nef}(X)/\R^*$ et $\Gamma = \Aut^* X$. Remarquons que tout sous-groupe discret de $\mathrm{Isom}(\mathcal H^n)$ agit proprement discontinûment sur $\mathcal H^n$ d'après \cite[5.3.5]{ratcliffe}. Par conséquent, il nous reste la cocompacité. Notons enfin qu'en réalité, c'est $\Aut^* X\rtimes \langle\sigma^*\rangle$ qui nous intéresse (pour $\sigma$ une structure réelle sur $X$): cependant, le résultat de Totaro ne permet pas conclure que $\Aut^* X\rtimes \langle\sigma^*\rangle$ a un domaine fondamental $P'$ polyédral rationnel sur $N$. Cependant, $P'$ est clairement inclus dans le domaine fondamental $P$ de $\Aut^*X$ donc, si $P$ est compact, alors $P'$ l'est aussi donc la cocompacité de l'action de $\Aut^*X$ nous suffit.\\

The aim of this article is to prove the following result:

\begin{theorem} \label{thm-finiteness-klt} 
%\index{KLT Calabi-Yau! Finiteness of real forms}
%Let $(X,\Delta)$ be a KLT Calabi-Yau pair, where $X$ is a rational surface.\\ 
%$X$ has a finite number of real forms up to $\R$-isomorphism.
Let $X$ be a smooth projective complex surface which is regular (i.e. $q(X) := h^1(X,\mathcal O_X) = 0$).\\
If there is a $\R$-divisor $\Delta$ on $X$ such that $(X,\Delta)$ is a KLT Calabi-Yau pair, then $X$ has finitely many real forms up to $\R$-isomorphism.\end{theorem}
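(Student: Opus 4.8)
The plan is to translate the finiteness of real forms into the finiteness of conjugacy classes of finite subgroups of an automorphism group, and then to make that group act geometrically (properly discontinuously and cocompactly) on a complete $\mathrm{CAT}(0)$ space, so that Theorem~\ref{conj-classes-cat0} applies. If $X$ carries no real structure it has no real form and there is nothing to prove; so fix a real structure $\sigma_0$ and let $G=\langle\Aut X,\sigma_0\rangle$ be the group of all holomorphic and antiholomorphic automorphisms of $X$, which contains $\Aut X$ with index $2$. The real structures on $X$ are exactly the antiholomorphic involutions in $G$, and two of them are equivalent precisely when they generate conjugate subgroups of $G$: conjugating an antiholomorphic involution by an antiholomorphic element reduces, via $\sigma_0^2=\Id$, to conjugation by a holomorphic one, and for a subgroup of order $2$ conjugacy of subgroups and of generators coincide. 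Since by Weil descent the real forms of $X$ up to $\R$-isomorphism correspond to the real structures up to equivalence, it suffices to prove that $G$ has only finitely many conjugacy classes of finite subgroups.

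First I would dispose of the surfaces with positive-dimensional automorphism group. If $\Aut^0X\neq1$, then $q(X)=0$ makes $\Aut^0X$ linear, so $X$ is rational, and the classification of rational surfaces with positive-dimensional automorphism group shows that the image $\Aut^*X$ of $\Aut X$ in $\O(\NS X)$ is finite; a fortiori $\Aut^*X$ contains no copy of $\Z*\Z$, and Theorem~\ref{first-thm} already yields finiteness of real forms. Hence I may assume $\Aut^0X=1$. Then the kernel of $\Aut X\to\O(\NS X)$ fixes an ample class $H$, so it is a closed subgroup scheme of $\Aut(X,H)$, a linear algebraic group of finite type; being also zero-dimensional, it is finite. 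Thus $\Aut X$, and likewise $G$, maps to $\O(\NS X)$ with finite kernel, and it suffices to prove finiteness of conjugacy classes of finite subgroups for the image $\overline G$ of $G$ in $\O(\NS X)$, since a finite-kernel extension of a group with this property still has it.

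Now comes the geometric heart, resting on Totaro's Cone Theorem~\ref{cone-thm}. By the Hodge index theorem the intersection form on $\NS X\otimes_{\Z}\R$ has signature $(1,\rho-1)$, $\rho=\mathrm{rk}\,\NS X$, so the projectivization of the positive cone $\{x:x^2>0\}$ is the Klein model of real hyperbolic space $\mathbb{H}^{\rho-1}$, a complete $\mathrm{CAT}(0)$ (indeed $\mathrm{CAT}(-1)$) space; $\O(\NS X)$ acts on it as a discrete group of isometries, and this group contains $\overline G$ (a real structure acts on $\NS X$ preserving the nef cone), which therefore acts properly discontinuously on $\mathbb{H}^{\rho-1}$ and preserves the convex subset $\mathcal N:=\P(\mathrm{Nef}(X))\cap\mathbb{H}^{\rho-1}$. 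By Theorem~\ref{cone-thm} the action of $\Aut X$ on $\mathrm{Nef}(X)$ has a rational polyhedral fundamental domain, whose projectivization $P$ is a convex polyhedron with finitely many faces in the closed model $\overline{\mathbb{H}^{\rho-1}}$. If $P$ has no vertex on the boundary sphere --- equivalently, if no nonzero nef class has self-intersection zero --- then $P$ is compact, its $\Aut X$-translates cover $\mathcal N$, and $\mathcal N$ is a complete $\mathrm{CAT}(0)$ space on which $\overline G$ acts properly discontinuously and cocompactly, so Theorem~\ref{conj-classes-cat0} concludes.

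The case where $P$ has ideal vertices is what I expect to be the main obstacle, and the place where the KLT Calabi-Yau hypothesis is indispensable. An ideal vertex is a nef Cartier class $D$ with $D^2=0$; from $q(X)=0$, $K_X\equiv-\Delta$ with $\Delta$ effective (so $-K_X\cdot D=\Delta\cdot D\ge0$) and Riemann--Roch one finds that such a $D$ is effective and defines a fibration $X\to\P^1$ of genus $\le 1$, and by the polyhedrality in Theorem~\ref{cone-thm} there are only finitely many such classes up to $\Aut X$. Since the parabolic subgroups of $\mathrm{Isom}(\mathbb{H}^{\rho-1})$ have abelian unipotent radical, the stabilizers of these cuspidal directions are virtually abelian; one then builds the required $\mathrm{CAT}(0)$ space by excising the finitely many horoball neighbourhoods of the cusps and gluing in flat pieces --- or, in the simplest situation of a surface carrying a single $\Aut X$-invariant fibration, such as a Halphen surface, by replacing $\mathbb{H}^{\rho-1}$ altogether with the Euclidean space obtained from an affine slice of $\mathrm{Nef}(X)$ modulo the isotropic class $K_X$. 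Either way one obtains a complete $\mathrm{CAT}(0)$ space carrying a proper cocompact $\overline G$-action, so Theorem~\ref{conj-classes-cat0} gives finitely many conjugacy classes of finite subgroups of $\overline G$, hence of $G$, hence finitely many real structures on $X$ up to equivalence, hence finitely many real forms up to $\R$-isomorphism. The genuinely delicate points --- and where both Totaro's theorem (the cusps are finite in number and polyhedrally organised) and the Calabi-Yau structure (the cusps come from fibrations and are Euclidean in shape) are essential --- are the proof that the cuspidal stabilizers are virtually abelian and that the glued space remains $\mathrm{CAT}(0)$ with a geometric action.
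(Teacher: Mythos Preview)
Your broad strategy matches the paper's: reduce to finiteness of conjugacy classes of finite subgroups, construct a geometric action on a complete CAT(0) space from the nef cone inside hyperbolic space via Totaro's theorem, and invoke Theorem~\ref{conj-classes-cat0}. (Your reduction via case analysis on $\Aut^0X$ differs from the paper's, which instead uses the Galois-cohomology exact sequence attached to $1\to\Aut^{\#}X\to\Aut X\to\Aut^*X\to1$ together with a general finiteness result for the twisted $H^1$ of $\Aut^{\#}X$; both routes work.) But your treatment of the cuspidal case has two genuine gaps relative to what is actually needed.

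First, the CAT(0) construction: the paper does \emph{not} ``glue in flat pieces,'' and it never uses the fibration interpretation of isotropic nef classes or the virtual abelianness of parabolic stabilizers. One simply removes from $\mathcal H^n$ a $\Gamma$-invariant family $U$ of open horoballs with pairwise disjoint closures; by \cite[II.11.27]{bridson-haefliger} the complement $\mathcal H^n\setminus U$ is already complete CAT(0) for the induced length metric, and Lemma~\ref{cat(0)-truncated-horoballs} then checks (after possibly shrinking the horoballs) that the intersection with the relevant convex subset remains geodesic, hence CAT(0). No gluing or warping is involved. Second, and more substantively, the convex set on which cocompactness holds is not $\mathcal N=\pi(\mathrm{Nef}\,X\cap\mathcal H^n)$ but $\mathcal N\cap C(\Gamma)$, where $C(\Gamma)$ is the hyperbolic convex hull of the limit set of $\Gamma=\Aut^*X$. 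The finitely-sided fundamental polyhedron $P$ may have ideal vertices that are \emph{not} limit points of $\Gamma$; at such vertices there is no horocusp region to excise, and the quotient stays non-compact unless one first passes to the convex core. The technical heart of the paper is precisely Lemma~\ref{ratcliffe-modified}, an adaptation of Ratcliffe's geometric-finiteness machinery from $\mathcal H^n$ to a $\Gamma$-invariant closed convex subset $N$, which establishes that $(P\cap C(\Gamma))\setminus V_0$ is compact for a suitable finite union $V_0$ of horocusp regions with disjoint closures.
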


Thus, Example \ref{example-klt} shows that our previous result about finiteness of real forms for Cremona special surfaces, i.e.  (cf. \cite[3.2]{mon-papier}) is a special case of this result when the fibre $F$ we mentioned in \ref{example-klt} above is reduced with simple normal crossings. \vspace{.5cm}\\
%\begin{center} --- $***$ --- \end{center}
\noindent \textbf{Strategy of the proof. }--- Let $\sigma$ be a real structure on $X$ and let $\Aut^{\#}X$ (resp. $\Aut^*X$) be the kernel (resp. the image) of the natural morphism $p : \Aut X\to \O(\Pic X)$. If $G=\langle\sigma\rangle$ acts on $\Aut X$ by conjugation (i.e. $\forall\phi\in\Aut X,\;\sigma.\phi := \sigma\phi\sigma^{-1}$), then the exact sequence
$$1\longrightarrow \Aut^{\#}X \longrightarrow \Aut X\longrightarrow \Aut^*X \longrightarrow 1$$
is $G$-equivariant and induces an exact sequence in Galois cohomology. By \cite[I.\S 5.5, Cor. 3]{serre-cohgal-english}, it suffices to prove that $H^1(G,\Aut^*X)$ is finite and that $\forall b\in Z^1(G,\Aut X),\;H^1(G,(\Aut^{\#}X)_b)$ is finite. But this last condition is true for every smooth irreducible projective complex variety by \cite[III.\S 4.3]{serre-cohgal-english} and \cite[1.2]{mon-papier} (see also the proof of Theorem 2.5 in \textit{loc. cit.}). Thus, we need only to show the finiteness of $H^1(G,\Aut^*X)$.\\
\indent Now, for the special case of KLT Calabi-Yau pairs, the idea is the following: using Totaro's Cone Theorem, we will construct a complete CAT(0) space on which $\Aut^*X\rtimes \langle\sigma^*\rangle$ acts geometrically (where $\sigma^*\in\O(\Pic X)$ is the isometry induced by $\sigma$). Then, we will be able to conclude that $H^1(G,\Aut^*X)$ is finite using Theorem \ref{conj-classes-cat0} together with the following result (which can be proved easily using the definitions as in the proof of \cite[Th. 2.4]{mon-papier}) :\vspace{-.1cm}
\begin{lemma}\label{finiteness-conj-classes}
Let $G=\langle \sigma\rangle \simeq \Z/2\Z$, $A$ be a $G$-group and $A\rtimes G$ the semidirect product defined by the action of $G$ on $A$.\\
If $A\rtimes G$ has a finite number of conjugacy classes of elements of order 2 (in particular, if it has finitely many conjugacy classes of finite subgroups), then $H^1(G,A)$ is finite.
\end{lemma}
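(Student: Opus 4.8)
\smallskip
\noindent\emph{Proof sketch of Lemma \ref{finiteness-conj-classes}.} --- The plan is to embed $H^1(G,A)$ into the set of conjugacy classes of order-$2$ elements of $\Gamma := A\rtimes G$ via a map that is at most $2$-to-$1$, so that finiteness of the latter set forces finiteness of $H^1(G,A)$.

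First I would unwind the cohomology. Since $G=\langle\sigma\rangle$ has order $2$, a $1$-cocycle is determined by the single element $a:=b_\sigma\in A$, and the cocycle condition reads $a\cdot{}^{\sigma}a = e$, i.e. ${}^{\sigma}a = a^{-1}$; two cocycles $a,a'$ define the same class in $H^1(G,A)$ if and only if $a' = c^{-1}\,a\,{}^{\sigma}c$ for some $c\in A$. On the other side, in $\Gamma$ with the usual law $(x,g)(y,h) = (x\cdot{}^{g}y,\,gh)$ one computes $(a,\sigma)^2 = (a\cdot{}^{\sigma}a,\,1)$; since $(a,\sigma)$ is never the identity of $\Gamma$ (as $\sigma\neq 1$), the element $(a,\sigma)$ has order exactly $2$ precisely when $a$ is a cocycle. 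Thus $a\mapsto(a,\sigma)$ identifies $Z^1(G,A)$ with the set of order-$2$ elements of $\Gamma$ of the form $(\ast,\sigma)$.

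The key computation is conjugation in $\Gamma$: for $(c,g)\in\Gamma$, using that $G$ is abelian (so $g\sigma g^{-1}=\sigma$) one gets
$$(c,g)(a,\sigma)(c,g)^{-1} = \big(c\cdot{}^{g}a\cdot{}^{\sigma}(c^{-1}),\;\sigma\big).$$
Taking $g=1$, the first coordinate is $c\,a\,{}^{\sigma}(c^{-1})$, which is exactly $a$ modified by the coboundary of $c^{-1}$; hence cohomologous cocycles give $\Gamma$-conjugate elements, and $[a]\mapsto[(a,\sigma)]$ is a well-defined map from $H^1(G,A)$ to the set of order-$2$ conjugacy classes of $\Gamma$. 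Conversely, suppose $(a,\sigma)$ and $(a',\sigma)$ are conjugate by some $(c,g)$. If $g=1$, the displayed formula gives $a' = c\,a\,{}^{\sigma}(c^{-1})$, so $[a']=[a]$. If $g=\sigma$, the first coordinate is $c\cdot{}^{\sigma}a\cdot{}^{\sigma}(c^{-1})$; using the cocycle relation ${}^{\sigma}a=a^{-1}$ this equals $c\,a^{-1}\,{}^{\sigma}(c^{-1})$, whence $[a']=[a^{-1}]$ (and $a^{-1}$ is again a cocycle, since ${}^{\sigma}(a^{-1})=({}^{\sigma}a)^{-1}=a=(a^{-1})^{-1}$). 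Therefore every fiber of $[a]\mapsto[(a,\sigma)]$ is contained in $\{[a],[a^{-1}]\}$, so this map is at most $2$-to-$1$.

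Finally, if $\Gamma=A\rtimes G$ has only finitely many conjugacy classes of order-$2$ elements, the image of our map is finite, and being at most $2$-to-$1$ this yields $\#H^1(G,A)\le 2\cdot\#\{\text{order-}2\text{ conjugacy classes of }\Gamma\}<\infty$. The parenthetical statement follows at once: if $\Gamma$ has finitely many conjugacy classes of finite subgroups, it has finitely many conjugacy classes of subgroups of order $2$, and since each order-$2$ element generates an order-$2$ subgroup while conjugate elements generate conjugate subgroups, $\Gamma$ has finitely many conjugacy classes of order-$2$ elements. I do not expect a genuine obstacle here beyond careful bookkeeping with the semidirect-product law; the one point that needs attention is the case $g=\sigma$ in the converse direction, where one should \emph{not} expect injectivity — the class $[a^{-1}]$ really does occur, which is the harmless source of the factor $2$.
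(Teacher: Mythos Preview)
Your argument is correct and is exactly the ``unwind the definitions'' approach the paper alludes to (the paper gives no detailed proof here, only a pointer to \cite[Th.~2.4]{mon-papier}). One small sharpening: your map $[a]\mapsto[(a,\sigma)]$ is in fact \emph{injective}, not merely $2$-to-$1$, because $[a^{-1}]=[a]$ always holds in $H^1(G,A)$ --- take $c=a$ in the coboundary relation and use the cocycle identity ${}^{\sigma}a=a^{-1}$ to get $a^{-1}\cdot a\cdot{}^{\sigma}a = a^{-1}$; this does not affect the validity of your finiteness conclusion.
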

%\vspace{.5cm}
\begin{center} --- $***$ --- \end{center}
\vspace{.5cm}
Before beginning the proof of this theorem, let us give some definitions to clarify the terms we use:
\begin{definition} \label{def-poly-funda-domain} Let $X$ be either $\mathcal H^n$ or $\R^n$ (\footnote{In what follows, when writing $\R^n$, it is understood as $\R^n$ equipped with its euclidean metric (which is denoted by $E^n$ in \cite{ratcliffe}).}).
\begin{itemize}
\item A subset $C$ of $X$ is \textbf{convex} if $\forall x,y\in C$, the geodesic segment linking $x$ and $y$ is contained in $C$.
\item A \textbf{side} of $C$ is a maximal nonempty convex subset of the relative boundary $\partial C$ (cf.\cite[p. 195, 198]{ratcliffe}).
\item A \textbf{(convex) polyhedron} of $X$ is a nonempty closed convex subset of $X$ whose collection of sides is locally finite. In what follows, we will always say "polyhedron" instead of "convex polyhedron".
\item Let $X$ be a subset of either $\mathcal H^n$ or $\R^n$. A \textbf{fundamental polyhedron (or polyhedral fundamental domain)} for the action of a discrete group $\Gamma$ of isometries of $X$ is a polyhedron $P$ whose interior $\mathring{P}$ is such that the elements of $\{g(\mathring{P}),\;g\in\Gamma\}$ are pairwise disjoint and $\displaystyle X=\bigcup_{g\in\Gamma} g(P)$. Moreover, $P$ is a \textbf{locally finite fundamental polyhedron} if the set $\{g(P),\;g\in\Gamma\}$ is locally finite, i.e. if for all compact $K\subseteq X$, there are only finitely many elements of $\{g(P),\;g\in\Gamma\}$ which intersect $K$.
\end{itemize}
\end{definition}

\begin{proof}[Proof of Theorem \ref{thm-finiteness-klt}]\label{beginning-proof-klt}
We begin by explaining how we can turn our problem into a problem of hyperbolic geometry. Hodge index Theorem shows that the signature of the intersection form on $\mathrm{NS}(X) = \Pic X$ is $(1,n)$, where $\mathrm{rk}\,\Pic X = n+1$. Note that this is the only place where we use the fact that $h^1(X,\mathcal O_X)=0$. In fact, we could try to remove this hypothesis but we should replace $\Aut^{\#}X$ and $\Aut^*X$ with the analogous groups corresponding to the action of $\Aut X$ on $\mathrm{NS}(X)$ instead of $\Pic X$ but we do not have a general result of cohomological finiteness for the kernel of the action of $\Aut X$ on $\mathrm{NS}(X)$ whereas we gave such a result for the kernel of the action of $\Aut X$ on $\Pic X$ in the paragraph "Strategy of the proof" above.\\
\indent Thus, we obtain the \textit{hyperboloid model} of the hyperbolic space $\mathcal H^n := \{v \in \Pic X\otimes_{\Z}\R\,|\;v^2=1,\;v.H>0\}$ equipped with the distance $d :(u,v) \mapsto \mathrm{argcosh}(u.v)$ (where $u.v$ is the intersection product of $u$ and $v$ and $H$ is an ample divisor class on $X$.\\
\indent The radial projection $\pi : \Pic X\otimes_{\Z}\R \to \Pic X\otimes_{\Z}\R$ from the origin onto the hyperplane $\{v \in \Pic X\otimes_{\Z}\R\,|\;v.E_0=1\}\simeq \R^n$ maps the hyperboloid $\mathcal H^n$ onto the open unit ball $D^n$ of this hyperplane: when endowed with the appropriate metric, this is the \textit{Klein (projective) model} of $\mathcal H^n$ and $\pi$ restricts to an isometry $\mathcal H^n\to D^n$. The geodesic lines of this model are straight line segments so that the convex subsets of $D^n$ (for the hyperbolic metric) are exactly its convex subsets for the euclidean metric of $D^n\subseteq \R^n$. Note that $\pi$ maps the isotropic half-cone $\{v \in \Pic X\otimes_{\Z}\R\,|\;v^2=0,\;v.E_0>0\}$ onto the boundary $\partial D^n$ (which can be seen as the set of lines of this isotropic half-cone). We also want to mention the \textit{Poincaré ball model} $B^n$ of $\mathcal H^n$ which is obtained from the hyperboloid model by means of a stereographic projection from the south pole of the unit sphere of $\Pic X\otimes_{\Z}\R$ on the hyperplane  $\{v \in \Pic X\otimes_{\Z}\R\,|\;v.E_0=0\}$.\\
\indent Finally, if we denote by $\text{Nef}\,X$ the nef effective cone of $X$ and if $N := \pi( \text{Nef}\,X\cap \mathcal H^n) \simeq (\text{Nef}\,X\cap \mathcal H^n)/\R^*$, then we see easily that $N$ is a closed convex subset of $D^n$ and that, by Totaro Cone Theorem \ref{cone-thm}, $\Aut X$ (or $\Aut^*X$) acts on it with a \textit{finitely sided} polyhedral fundamental domain (namely, the projection onto $D^n$ of a polyhedral fundamental domain of the action on $\text{Nef}\,X$). Note that, as we said in the statement of Theorem \ref{cone-thm}, there is a fundamental domain $\mathcal P$ of the action of $\Aut X$ on $\mathrm{Nef}\,X$ which is the closed convex cone generated by finitely many points of $\Pic X\otimes_{\Z}\R$; hence, $\bar{P}:=\pi(\mathcal P)\subseteq \bar{D^n}$ is the convex hull of finitely many points and classical results about convex polyhedra of $\R^n$ show that such a convex set is the intersection of finitely many half spaces and has finitely many sides (which are defined by the bounding hyperplanes of $\bar{P}$). Hence, this is also true for $P := \bar{P}\cap D^n$, the fundamental polyhedron of the action of $\Aut X$ on $N$.\\
\indent In order to use Lemma \ref{finiteness-conj-classes} and Theorem \ref{conj-classes-cat0}, we have to prove that $\Aut^*X\rtimes \langle\sigma^*\rangle$ acts properly and cocompactly by isometries on a CAT(0) complete metric space. In fact, we are reduced to prove Lemma \ref{ratcliffe-modified}, which is the adaptation to our case of \cite[12.4.5, 1$\Rightarrow$ 2]{ratcliffe} (where we replaced a fundamental domain of the action on $\mathcal H^n$ by a fundamental domain on a closed convex subset, which is our $N$), and Lemma \ref{cat(0)-truncated-horoballs}. \textit{The proof ends on page \pageref{end-proof-klt}.} 
\end{proof}%\newpage

\begin{lemma} \label{proper-action-discrete} Any discrete subgroup $\Gamma$ of $\Isom(\mathcal H^n)$ acts properly discontinuously on $\mathcal H^n$. \end{lemma}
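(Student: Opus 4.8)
The plan is to view $\Gamma$ as a subgroup of the ambient locally compact topological group $G:=\Isom(\mathcal H^n)$ and to use that the $G$-action on $\mathcal H^n$ has compact point stabilisers: this forces every orbit map of $G$ to be proper, so that a discrete subgroup meets the preimage of any compact set in only finitely many points, which is exactly the condition of Definition \ref{def-cat0}.

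First I would fix notation and recall the relevant structure. In the hyperboloid model $G$ is the matrix group $\O^+(1,n)\subseteq\GL_{n+1}(\R)$ of Lorentz transformations preserving the upper sheet: a locally compact Hausdorff topological group, acting continuously and transitively on $\mathcal H^n$, with the stabiliser $\mathrm{Stab}(x)$ of any point $x\in\mathcal H^n$ conjugate to $\O(n)$ and hence compact. Consequently the orbit map $\iota_x\colon G\to\mathcal H^n$, $g\mapsto g\cdot x$, is a proper map: it factors as the quotient map $G\to G/\mathrm{Stab}(x)$, which is proper since $\mathrm{Stab}(x)$ is compact, followed by the canonical homeomorphism $G/\mathrm{Stab}(x)\xrightarrow{\ \sim\ }\mathcal H^n$. (One may also see this by hand: writing $\rho:=d(g\cdot x,x)$, a Cartan-type decomposition gives $g=a_g k_g$ with $a_g$ the transvection of length $\rho$ along the geodesic from $x$ to $g\cdot x$ and $k_g\in\mathrm{Stab}(x)$, so if $g\cdot x$ ranges over a bounded subset of $\mathcal H^n$ then both factors range over compact subsets of $G$.)

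Then, fix $x\in\mathcal H^n$ and take any $r>0$. If $\gamma\in\Gamma$ satisfies $\gamma\,B(x,r)\cap B(x,r)\neq\emptyset$, choose $y$ in the intersection and write $y=\gamma z$ with $d(z,x)<r$; then, $\gamma$ being an isometry,
$$d(\gamma\cdot x,x)\leq d(\gamma\cdot x,y)+d(y,x)=d(z,x)+d(y,x)<2r,$$
so $\gamma\cdot x\in\overline{B(x,2r)}$ and thus $\gamma\in K:=\iota_x^{-1}\bigl(\overline{B(x,2r)}\bigr)$. The closed ball $\overline{B(x,2r)}$ is compact because $\mathcal H^n$ is a complete Riemannian manifold, so $K$ is a compact subset of $G$ by the previous paragraph. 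Since $\Gamma$ is a discrete subgroup of the Hausdorff group $G$ it is closed, whence $\Gamma\cap K$ is a discrete closed subset of the compact set $K$, hence finite. As
$$\{\gamma\in\Gamma\mid\gamma\,B(x,r)\cap B(x,r)\neq\emptyset\}\subseteq\Gamma\cap K,$$
this set is finite; since $x\in\mathcal H^n$ was arbitrary, the $\Gamma$-action on $\mathcal H^n$ is properly discontinuous.

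The only ingredient with genuine geometric content is the properness of the orbit map $\iota_x$, i.e.\ the compactness of point stabilisers together with the identification $\mathcal H^n\cong G/\mathrm{Stab}(x)$ of topological spaces; everything else reduces to the triangle inequality and to the remark that a compact discrete space is finite. (This lemma is classical; see \cite{ratcliffe}.)
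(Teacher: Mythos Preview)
Your proof is correct and follows essentially the same approach as the paper: both identify $\Isom(\mathcal H^n)\simeq\O^+(1,n)$, note that point stabilisers are conjugate to the compact group $\O(n)$, and deduce proper discontinuity from the resulting identification $\mathcal H^n\simeq G/K$. The only difference is that the paper invokes a black-box reference (\cite[3.1.1]{wolf}) for the final step, whereas you unpack that argument explicitly via properness of the orbit map and the triangle inequality.
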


\begin{proof} Note that the action of $\Isom(\mathcal H^n) \simeq \mathrm{O}^+(1,n)$ on $\mathcal H^n$ is transitive and that the stabilizer of a point $x\in\mathcal H^n$ (in the hyperboloid model) is the orthogonal group $\mathrm{O}(x^{\perp}) \simeq \O_n(\R)$. Thus, this action induces a bijection $\mathcal H^n \simeq \mathrm{O}^+(1,n)/\O_n(\R)$. Since $\Gamma$ is discrete in the locally compact group $\mathrm{O}^+(1,n)$ and since $\O_n(\R)$ is compact, the result follows from \cite[3.1.1]{wolf}. \end{proof}

%\newpage

Before stating our lemmas \ref{ratcliffe-modified} and \ref{cat(0)-truncated-horoballs}, let us give some other definitions (cf. \cite{ratcliffe}):
\begin{definition} Let $\Gamma$ be a discrete subgroup of $\Isom(\mathcal H^n)$.
\begin{itemize} 
\item A point $a\in\partial \mathcal H^n$ is a \textbf{limit point} of $\Gamma$ if there is a point $x$ of $\mathcal H^n$ and a sequence $(g_i)$ of elements of $\Gamma$ such that $(g_i(x))$ converges to $a$.
\item In the Poincaré ball model $B^n$, an \textbf{horoball} based at a point $a\in\partial B^n$ is an Euclidean ball contained in $\bar{B^n}$ which is tangent to $\partial B^n$ at the point $a$.
\item Assume $\Gamma$ contains a parabolic element (cf. \cite[\S 4.7]{ratcliffe}) having $a\in\partial \mathcal H^n$ as its fixed point.\\
A \textbf{horocusp region} is an open horoball $B$ based at a point $a\in\partial \mathcal H^n$ such that $$\forall g\in\Gamma\setminus \text{Stab}_{\Gamma}(a),\;g(B)\cap B = \emptyset.$$
\end{itemize}
\end{definition}
The following Lemma develops and makes more precise an idea of Totaro in \cite[\S 7]{totaro2}: 
\begin{lemma}\label{ratcliffe-modified} Let $\Gamma$ be a discrete subgroup of $\Isom(\mathcal H^n)$, $L(\Gamma)$ the set of limit points of $\Gamma$ in $\bar{\mathcal H^n}$, $C(\Gamma)$ the convex hull of $L(\Gamma)$ in $\bar{\mathcal H^n}$ and $N$ a $\Gamma$-invariant closed convex subset of $\mathcal H^n$.\\
If the action of $\Gamma$ on $N$ has a finitely sided polyhedral fundamental domain $P$, then there exists a finite union (maybe empty) $V_0$ of horocusp regions with disjoint closures such that $(P\cap C(\Gamma))\setminus V_0$ is compact.
\end{lemma}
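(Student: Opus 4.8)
The plan is to reproduce the argument of \cite[12.4.5, $1\Rightarrow2$]{ratcliffe} essentially line by line, keeping the $\Gamma$-invariant closed convex set $N$ throughout in the role played there by $\mathcal H^n$, and to isolate the handful of spots where the restriction to $N$ must be checked by hand.

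First I would collect the elementary input. Since $\bar{\mathcal H^n}$ is compact, so is $\bar P$, and hence $E := \bar P\cap\partial\mathcal H^n$ is a compact subset of the sphere at infinity; finite-sidedness of $P$ is what will make the subsequent analysis of $E$ possible. Since $N\neq\emptyset$ and $\bigcup_{g\in\Gamma}g(P)=N$, the $\Gamma$-orbit of any point of $N$ stays in $N$, so $L(\Gamma)\subseteq\bar N$; as $\bar N$ is a closed convex subset of $\bar{\mathcal H^n}$ it contains $C(\Gamma)$, whence $C(\Gamma)\cap\mathcal H^n\subseteq N$ (so that $P\cap C(\Gamma)$ is, up to boundary pieces of lower dimension, a fundamental domain for $\Gamma$ acting on the convex core $C(\Gamma)\cap\mathcal H^n$ --- this is the precise sense in which the statement adapts Ratcliffe's). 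Using the standard fact that $\overline{C(\Gamma)}\cap\partial\mathcal H^n=L(\Gamma)$, one gets $\overline{P\cap C(\Gamma)}\cap\partial\mathcal H^n\subseteq E\cap L(\Gamma)$, so it is enough to understand the ideal points of $\bar P$ that lie in the limit set.

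Next I would analyse $E\cap L(\Gamma)$ by imitating Ratcliffe's study of the ideal boundary of a finitely sided polyhedron. The dichotomy to establish is that each $a\in E\cap L(\Gamma)$ is either a \emph{cusp point of $P$} --- $\mathrm{Stab}_\Gamma(a)$ contains a parabolic element and every sufficiently small open horoball $B$ based at $a$ meets $P$ in a cusp region, namely in the intersection of $B$ with the finitely many sides of $P$ whose closure contains $a$ --- or else a point with $a\notin\overline{P\cap C(\Gamma)}$ (heuristically: at a non-cusp ideal point of $\bar P$ the convex core is cut off by a supporting hyperplane, just as a funnel of a geometrically finite Fuchsian group is separated from the convex core by its boundary geodesic). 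Finite-sidedness of $P$ then bounds the number of cusp points of $P$; list them $a_1,\dots,a_m$. The only thing new relative to \cite{ratcliffe} is that ``the sides of $P$'' are here the sides of $P$ viewed as a polyhedron of $N$; but $N$ is convex and $a_i\in\bar P\subseteq\bar N$, so the parabolic subgroup fixing $a_i$ preserves $N$ and every horoball based at $a_i$, and each step goes through unchanged.

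Finally I would assemble the horocusp regions. Each $a_i$ is a parabolic fixed point of the discrete group $\Gamma$ and so carries a horocusp region; shrinking the underlying horoballs I may take horocusp regions $V_i$ with $V_i\cap P$ a neighbourhood of $a_i$ in $P$ and with $\bar V_1,\dots,\bar V_m$ pairwise disjoint (possible because the $a_i$ are distinct boundary points and horoballs based there can be taken arbitrarily small). Put $V_0:=V_1\cup\dots\cup V_m$. Then $S:=(P\cap C(\Gamma))\setminus V_0$ is closed in $\mathcal H^n$, and its compactness follows once I show that the closure of $S$ in $\bar{\mathcal H^n}$ meets $\partial\mathcal H^n$ nowhere: such a boundary point would lie in $E\cap L(\Gamma)$ by the second paragraph, it cannot be a non-cusp point (not in $\overline{P\cap C(\Gamma)}$ at all), and it cannot be one of the $a_i$ since a neighbourhood of $a_i$ in $P$, being contained in $V_i\subseteq V_0$, has been deleted. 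I expect the genuine obstacle to be the classification of the third paragraph --- that an ideal point of $\bar P$ in $L(\Gamma)$ is either cusped in $P$ or separated from $C(\Gamma)$, and that there are finitely many cusp points; this is the technical heart of the finitely-sided-polyhedron theory of \cite[Ch.~12]{ratcliffe} (which itself makes precise the sketch of Totaro in \cite[\S7]{totaro2}), and the extra work here is only to check that all the ingredients it uses --- horoballs, supporting hyperplanes of $C(\Gamma)$, sides of $P$, horocusp regions --- remain valid when $\mathcal H^n$ is replaced by the closed convex $\Gamma$-invariant subset $N$, which they do.
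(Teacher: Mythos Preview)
Your proposal is correct and follows essentially the same route as the paper: both adapt \cite[12.4.5, $1\Rightarrow 2$]{ratcliffe} by replacing $\mathcal H^n$ with the closed convex $\Gamma$-invariant subset $N$ and checking that the chain of results from \cite[Ch.~12]{ratcliffe} (classification of ideal limit points of $\bar P$ as cusp points, finiteness of cusps from finite-sidedness, construction of disjoint horocusp regions) carries over. One minor remark: in Ratcliffe's framework (12.4.4) the dichotomy you state is in fact degenerate---every point of $\bar P\cap L(\Gamma)$ is already a cusp point, so the ``second alternative'' is vacuous---but this does not affect the argument.
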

\begin{proof}[Sketch of proof]
The proof of this lemma is an adaptation of the proof of \cite[12.4.5, 1$\Rightarrow$ 2]{ratcliffe}: we replaced the fundamental domain of the action on $\mathcal H^n$ by a fundamental domain of the action on a closed convex subset, which is our $N$ and we replaced geometrical finiteness hypothesis for $\Gamma$ (which is more general than the existence of a finitely sided fundamental polyhedral domain of $\Gamma$ on $\mathcal H^n$) by the hypothesis of the existence of a finitely sided fundamental polyhedral domain for the action of $\Gamma$ on $N$. Thus, we have to check all the proofs of the results used by \cite{ratcliffe} in the proof of \cite[12.4.5, 1$\Rightarrow$ 2]{ratcliffe} in order to replace $\mathcal H^n$ by a closed convex subset $N$. The details of these verifications are in the \hyperref[appendix-section]{Appendix}. Here, we sum up the main points: %which begins on page \pageref{appendix-chapter}.
\begin{itemize}[label=$\bullet$,font=\small]
%\begin{itemize}%[label=$\bullet$,font=\small]
\item if $P_0$ is a fundamental polyhedron of the action of $\Gamma$ on $\mathcal H^n$, then $P=P_0\cap N$ is a fundamental polyhedron of the action of $\Gamma$ on $\mathcal H^n$% (this is used in \textbf{(12.3 Lemma 2)} for example)
\item by (6.6.10, 8.5.7), like $\mathcal H^n$, a closed convex subset $N$ of $\mathcal H^n$ is a proper geodesically connected and geodesically complete metric space\footnote{A metric space is \textbf{proper} or \textbf{finitely compact} if every bounded closed subset of it is compact.}: indeed, $N$ is proper as a subspace of the proper metric space $\mathcal H^n$, $N$ is geodesically complete, as it is complete, and it is geodesically connected, since it is convex. %Hence, the action of $\Gamma$ on $N$ has a locally finite fundamental domain by \cite[5.3.5, 6.6.13]{ratcliffe} since $\Gamma$ is discrete and thus acts properly discontinuously on $\mathcal H^n$ by Lemma \ref{proper-action-discrete};
From this fact, we can deduce that the action of $\Gamma$ on $N$ has a (locally finite) exact convex fundamental polyhedron, e.g. a Dirichlet polyhedron, by (5.3.5, 6.6.13) and (6.7.4 (2)) since the group is discrete (and hence acts properly discontinuously, cf. Lemma \ref{proper-action-discrete}) and since there is a point $a\in N$ whose stabilizer $\Gamma_a$ is trivial (by (6.6.12)).
%\item if $\Gamma$ is a discrete group of isometries of $\mathcal H^n$ (seen as Poincaré half-space), then the stabilizer $\Gamma_{\infty}$ of the point at infinity induces a discrete subgroup of $\Isom(\R^{n-1}) = \Isom(\partial\mathcal H^n\setminus\{\infty\})$. By (5.4.6), there is a $\Gamma_{\infty}$-invariant affine subspace $Q$ of $\R^{n-1}$ of dimension $m\leq n-1$ and $\Gamma_{\infty}$ is a finite extension of a $\Z^m$. By (7.5.2), $\Gamma_{\infty}$ is a crystallographic isometry group of $\R^m\simeq Q$, i.e. $Q/\Gamma_{\infty}$ is compact
\item for the other points, it is a question of replacing $\mathcal H^n$ by $N$ and checking that everything remains true (sometimes by using convexity and/or closedness of $N$ in $\mathcal H^n$).%\vspace{-\baselineskip}
\end{itemize}

\end{proof} 
 
\begin{lemma} \label{cat(0)-truncated-horoballs}
Let $C$ be a closed convex subset of $\mathcal H^n$, $\Gamma$ a discrete subgroup of $\Isom(\mathcal H^n)$ stabilizing $C$, $V_0$ a finite family of open horoballs with disjoint closures and $\displaystyle V_1 := \bigcup_{\gamma\in\Gamma} \gamma(V_0)$.\\
There is a family of open horoballs with disjoint closures, obtained by shrinking the horoballs of $V_1$, whose union $U$ is such that $C\setminus U$ is a complete CAT(0) space.
\end{lemma}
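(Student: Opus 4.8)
The plan is to build $U$ by shrinking the horoballs of $V_1$ in a $\Gamma$-equivariant way, in two successive steps, and then to recognise $C\setminus U$ as a ``truncated'' complete CAT(0) space, transposing to $C$ the classical arguments available for $\mathcal H^n$. The hard part is the second step (recognising it as CAT(0)) together with the reason the first step is needed at all.

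First a preliminary reduction. Write the horoballs of $V_0$ as $B_1,\dots,B_k$, based at distinct points $a_1,\dots,a_k\in\partial\mathcal H^n$, so that $V_1=\{\gamma B_i\mid\gamma\in\Gamma,\ 1\le i\le k\}$. Since $\Gamma$ stabilises $C$, the condition ``$a_i\in\overline C$'' is $\Gamma$-invariant; and if $a_i\notin\overline C$ then the Busemann function $\beta_{a_i}$ is bounded below on $C$, so after replacing $B_i$ — and hence its whole $\Gamma$-orbit — by a small enough concentric sub-horoball we may assume $B_i\cap C=\emptyset$. So I may assume every remaining base point lies in $\overline C$; as $C$ is closed and convex in $\mathcal H^n$ it then contains a geodesic ray towards each $a_i$, whence $\beta_{a_i}$ restricts to $C$ as a Busemann function of the complete CAT($-1$) space $C$, and $C\cap\overline B$ is intrinsically a horoball of $C$ for every horoball $B$ based at one of the $a_i$.

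Next comes the shrinking proper. For $t\ge 0$ let $B_i^{(t)}\subseteq B_i$ be the concentric horoball obtained by pushing $\partial B_i$ a hyperbolic distance $t$ towards $a_i$, and put $U_t:=\{\gamma B_i^{(t)}\mid\gamma\in\Gamma,\ 1\le i\le k\}$. Using that $\Gamma$ acts properly discontinuously on $\mathcal H^n$ (Lemma \ref{proper-action-discrete}), that the $B_i$ are horocusp regions (precisely invariant under their stabilisers), and that there are finitely many of them, the standard argument producing disjoint horoball neighbourhoods of the cusps of a discrete group — cf. \cite[Ch.~12]{ratcliffe} — provides a $t_0$ such that, for $t\ge t_0$, the members of $U_t$ are pairwise disjoint open horoballs with pairwise disjoint closures; and a family of horoballs with pairwise disjoint closures is automatically locally finite in $\mathcal H^n$. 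I also enlarge $t_0$ so that, for each $i$, the intrinsic horoball $C\cap\overline{B_i^{(t_0)}}$ of $C$ and its complement in $C$ are connected and are the closures of their interiors — possible since $C\cap\overline{B_i^{(t)}}$ retreats into the end of $C$ at $a_i$ as $t\to\infty$. Set $U:=U_{t_0}$: a $\Gamma$-invariant, locally finite family of pairwise disjoint open horoballs with pairwise disjoint closures, obtained by shrinking $V_1$.

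Finally I would show that $Y:=C\setminus U$, with the induced length metric, is a complete CAT(0) space. Completeness is routine: $U$ being open, $Y$ is closed in the complete space $C$, hence complete for the subspace metric, which is locally bi-Lipschitz to the length metric because $U$ is locally finite; and $Y$ is a locally compact length space, so by Hopf--Rinow it is proper and geodesic. For the CAT(0) property I would run the ``neutered space'' argument: $C$ is proper and CAT($-1$), the removed sets are intrinsic horoballs of $C$ (first step), and, by the second step, near each of its points $Y$ is the complement in $C$ of at most one such horoball — so it suffices to verify local CAT(0) there, together with simple connectedness of $Y$ (each removal being a ``bite'' at a boundary point of $C$, which does not change $\pi_1$), and then invoke the Cartan--Hadamard theorem for geodesic spaces. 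Concretely, this amounts to re-running, with the convex set $C$ in place of $\mathcal H^n$, the proof that $\mathcal H^n$ minus a locally finite family of open horoballs with pairwise disjoint closures is CAT(0) — the fact underlying Totaro's remark in \cite[\S7]{totaro2} — checking each step against the convexity and closedness of $C$ in $\mathcal H^n$, in the same spirit as Lemma \ref{ratcliffe-modified} and the Appendix; this verification is the bulk of the work. The one genuinely delicate point — and the reason shrinking cannot be skipped — is the interaction of the horospheres with $C$: before shrinking, a horosphere may meet $C$ in a disconnected set, so that $C$ minus the corresponding open horoball is disconnected and hence neither complete nor geodesic, let alone CAT(0); the shrinking step removes this pathology and simultaneously makes each $C\cap\overline B$ a genuine intrinsic horoball of $C$, which is exactly what lets the hyperbolic argument go through.
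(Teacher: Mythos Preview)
Your outline is plausible but takes a genuinely different and heavier route than the paper's, and it leaves its central step as an unperformed verification.

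You propose to work \emph{intrinsically}: view $C$ itself as a proper CAT($-1$) space, recognise each $C\cap\overline B$ as an intrinsic horoball of $C$, and then re-run the neutered-space argument inside $C$, checking local CAT(0) at horosphere points and globalising via Cartan--Hadamard. As you yourself say, ``this verification is the bulk of the work'', and you do not carry it out; note also that \cite[II.11.27]{bridson-haefliger} is stated for pinched Hadamard manifolds, so transporting it to an arbitrary closed convex $C\subset\mathcal H^n$ is not a citation but a fresh argument. (You also import the hypothesis that the $B_i$ are horocusp regions in order to make $V_1$ disjoint; that is not assumed in the lemma, though it holds in the application.)

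The paper instead works \emph{ambiently} and avoids this re-verification entirely. By \cite[II.11.27]{bridson-haefliger}, $\mathcal H^n\setminus U$ is already a complete CAT(0) space for the induced length metric whenever $U$ is a disjoint family of open horoballs, and $C\setminus U$ is closed in it, hence complete. So it suffices to show that $C\setminus U$ is \emph{convex} (geodesically connected) in $\mathcal H^n\setminus U$; then \cite[II.1.15(1)]{bridson-haefliger} gives CAT(0) at once. For this the paper uses the explicit description of geodesics in the neutered space \cite[II.11.33--34]{bridson-haefliger}: each is a concatenation of hyperbolic geodesic segments and arcs on the bounding horospheres. The hyperbolic pieces stay in $C$ automatically by convexity of $C$ in $\mathcal H^n$; the horospherical arcs might not, and \emph{this} is exactly what the shrinking is for --- one shrinks each horoball of $V_0$ until its horosphere ``does not get out of $C$'', which forces the horospherical arcs joining two points of $C$ to remain in $C$. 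Thus what your plan would obtain by replaying an entire chapter of \cite{bridson-haefliger} inside $C$, the paper gets from one convexity check against an explicit geodesic description.
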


\begin{proof} %Firstly, $C\setminus U$ is complete since it is a closed subset of the complete space $\mathcal H^n$.\\
By \cite[II.11.27]{bridson-haefliger}, for every family $U$ of \textit{disjoint}\footnote{This is the key point which explains why we had to prove Lemma \ref{ratcliffe-modified}.} open horoballs, $\mathcal H^n\setminus U$ is a complete CAT(0) space for the induced length metric (this distance is defined between 2 points as the infimum of the lengths of rectifiable curves of $\mathcal H^n\setminus U$ between those two points ; it is different from the metric induced by the hyperbolic metric on $\mathcal H^n\setminus U$). Thus, $C\setminus U$ is complete as a closed subset of the complete space $\mathcal H^n\setminus U$. \\
It remains to study geodesic connectedness (term of \cite[\S 1.4]{ratcliffe}) or convexity (term of \cite[I.1.3]{bridson-haefliger}) of $C\setminus V_1$ in $\mathcal H^n\setminus V_1$ to conclude (using \cite[II.1.15.(1)]{bridson-haefliger}) that $C\setminus V_1$ is CAT(0) for the metric induced by the distance of $\mathcal H^n\setminus V_1$ (which is itself the length metric induced by the metric of $\mathcal H^n$). So let $x,y\in C\setminus V_1$ : if the geodesic $\gamma$ of $\mathcal H^n$ joining $x$ and $y$ is contained in $\mathcal H^n\setminus V_1$, then it is also contained in $C\setminus V_1$ since $C$ is convex. Otherwise,  $\Im\gamma$ passes through at least one horoball and \cite[II.11.33, II.11.34]{bridson-haefliger} shows that a geodesic $\delta$ of $\mathcal H^n\setminus V_1$ linking $x$ and $y$ is obtained by concatenation of the hyperbolic geodesics which are tangent to the bounding horospheres of the horoballs crossed by $\gamma$ on the one hand and geodesics of these horo\textit{spheres} on the other hand.
\textit{A priori}, it may happen that $\Im\delta$ is not contained in $C$. But we can shrink $V_0$ so that the antipodal point of the base point of each horosphere of $V_0$ belongs to $C$ (\footnote{More simply, the horospheres do not "get out" of $C$.}), which causes this effect on all the horoballs of $V_1$ under a finite number of operations.
% the intersection of bounding horospheres with $C$ is connected  %\attention \textbf{Et alors ? Pourquoi est-ce important ?} 
Hence, the geodesics of the horospheres are contained in $C$ and the hyperbolic geodesics are contained in $C$ because they join two points of $C$. Thus, if we denote by $U$ the result of this shrinking of $V_1$, we showed that $C\setminus U$ is geodesically connected in $\mathcal H^n\setminus U$ and this shows that $C\setminus U$ is a complete CAT(0) space.
\end{proof}

\begin{proof}[End of the proof of Theorem \ref{thm-finiteness-klt}] \label{end-proof-klt}
\indent We apply Lemma \ref{ratcliffe-modified} with $\Gamma = \Aut^*X$, $N = \pi( \text{Nef}\,(X)\cap \mathcal H^n) \simeq \text{Nef}\,(X)/\R^*$ and $P$ being a fundamental polyhedron of the action of $\Gamma$ on $N$: this gives us a finite family $V_0$ of open horoballs \textit{with disjoint closures} and a convex subset $P_C = P\cap C(\Gamma)$ of $P$ such that $P_C\setminus V_0$ is \textit{compact}. Thus, $\Gamma$ acts properly (by Lemma \ref{proper-action-discrete}) and cocompactly on $C\setminus V_1$ where $C := N\cap C(\Gamma)$ and $\displaystyle V_1 := \bigcup_{\gamma\in\Gamma} \gamma(V_0)$ (note that $C(\Gamma)$ is a $\Gamma$-invariant closed convex subset of $\mathcal H^n$).\\
\indent Now, by Lemma \ref{cat(0)-truncated-horoballs}, we can replace $V_1$ by another family $U$ of open horoballs \textit{with disjoint closures} such that $C\setminus U$ is a complete CAT(0) space. The compacity of a fundamental domain is preserved by this shrinking because $P\setminus (U\cap P)$ is a bounded closed subset of $\mathcal H^n$, so it is compact because $\mathcal H^n$ is a proper metric space. By the way, one can verify that the proof of \cite[12.4.5, 1$\Rightarrow$ 2]{ratcliffe} allows to shrink the horoballs without trouble.\\
\indent \textit{Finally,} we can conclude that $\Aut^*X$ acts properly and cocompactly on the complete CAT(0) space $C\setminus U$. It is not enough: in order to apply Lemma \ref{finiteness-conj-classes} and Theorem \ref{conj-classes-cat0}, we must obtain the same result for $\Aut^*X\rtimes \langle\sigma^*\rangle$, where $\sigma$ is a real structure on $X$.
By Lemma \ref{proper-action-discrete}, the discrete isometry group $\Aut^*X\rtimes \langle\sigma^*\rangle$ acts properly on $\mathcal H^n$ (hence also on $\mathcal H^n\cap (C\setminus U)$). Since there is a fundamental domain of $\Aut^*X\rtimes \langle\sigma^*\rangle$ which is a closed subset of that of $\Aut^*X$ which is compact, we see that $\Aut^*X\rtimes \langle\sigma^*\rangle$ also acts cocompactly: this concludes the proof.
\end{proof} 

\begin{rmk} In fact, we could give a much shorter proof of Theorem \ref{thm-finiteness-klt} if $N$ were smooth complete.\\
%\indent Let us fix the notations: $\Gamma  = \Aut^*X$, $A = \mathrm{Ample}(X)/\R^*$, $N = \mathrm{Nef}(X)/\R^* = \bar{A}\subseteq \mathcal H^n$.\\% Here, $\Pic X\otimes_{\Z}\R = \R^{n+1}$ (i.e.\;$n = \mathrm{rk}\,\Pic X-1$) and $\mathcal H^n$ is the projective Klein model of the $n$-dimensional hyperbolic space (cf. \cite[\S 6.1]{ratcliffe}). \\
\indent First, note that $N$ is a pinched Hadamard manifold\footnote{A \textbf{pinched Hadamard manifold} is a complete simply connected Riemannian manifold whose all sectional curvatures lie between two negative constants.} as a convex subset of $\mathcal H^n$: in particular, note that it is %a complete manifold because any open subset $U$ (here, $U=A$) of a complete metric space $(E,d)$ can be equipped with a metric $d'$ equivalent to $d|_U$ such that $(U,d')$ is complete and that all sectional curvatures of $U$ are $-1$. Finally, $A$ is 
simply connected because of its convexity (which can be seen in the Klein model, where convexity is the same as Euclidean convexity and really implies simply connectedness).\\
%\indent \textit{A priori}, it seems that we do not use Totaro's result but this is not the case: when we use \cite[6.4.8]{ratcliffe}, we have to note that in \cite[\S 6.3]{ratcliffe}, a convex polyhedron of $\R^n$ is a nonempty, closed, convex subset of $\R^n$ such that the collection of its sides is \textit{locally finite} in $\R^n$. In our case, it is included in the compact subset $\overline{D^n}$ (which is the Klein disk model of $\mathcal H^n$) thus "locally finite" is equivalent to "finite" and we must know that $\overline{P}$ is finite-sided in order to show that it is a convex polyhedron of $\R^n$: here we use Totaro's result. \\
\indent Now, if we denote by $P$ a fundamental domain of the action of $\Aut X$ on $N\subseteq D^n$, we remark that $\overline{P}$ is a fundamental domain of the action of $\Aut X$ on $\bar{N}\subseteq\overline{D^n}$ and that it is a convex polyhedron of the Klein model $\overline{D^n}$ of $\overline{\mathcal H^n}$ and also of the Euclidean space $\R^n$ (since convexity in the Klein model is the same as Euclidean convexity). Indeed, by Definition \ref{def-poly-funda-domain}, we need to check that $\bar{P}$ has a locally finite collection of sides. But this is true since Totaro Cone Theorem \ref{cone-thm} shows that it is finitely sided (as we have seen in the beginning of the proof of Theorem \ref{thm-finiteness-klt}, see page \pageref{beginning-proof-klt}). Thus, by \cite[6.4.8]{ratcliffe}, $\bar{P}$ has finite volume. Therefore, $P$ is also of finite volume. Since there is a fundamental domain $P'$ of $\Aut X\rtimes \langle\sigma\rangle$ which is contained in $P$, we see that $P'$ is also of finite volume. Thus, the quotient of $N$ by $\Gamma:= \Aut X\rtimes \langle \sigma\rangle$ is a finite volume quotient of the pinched Hadamard manifold $N$ and \cite[5.4.2, F5, 6.1, 5.5.2]{bowditch95} %shows that the group $\Gamma$ is geometrically finite. Now \cite[5.5.2, 6.1]{bowditch95}
shows that $\Gamma$ has finitely many conjugacy classes of finite subgroups: thus $X$ has a finite number of real forms by Lemma \ref{finiteness-conj-classes}.\\

\end{rmk}

\section{Two examples} 

\begin{example} \label{example-12-points} Here we study the example given by Totaro in \cite{totaro}: it is a blow-up $X$ of $\P^2$ at 12 points and we show that $\Aut X$ contains a subgroup isomorphic to $\Z*\Z$. Since there exists a $\R$-divisor $\Delta$ such that $(X,\Delta)$ is a KLT Calabi-Yau pair, $X$ has finitely many non-isomorphic real forms and this finiteness cannot be deduced from Theorem \ref{first-thm}.\\
\indent Let $\zeta=e^{2i\pi/3}$. We denote by $X$ the blow-up of $\P^2$ at the 12 points of the set $\mathcal P = \{[1:\zeta^i:\zeta^j],\;(i,j)\in\intl 0;2\intr^2\}\cup\{[1:0:0],\;[0:1:0],\;[0:0:1]\}$. Let $C_1,\dots,C_9$ be the lines of $\P^2$ of equations $(y=x),\;(y=\zeta x),\;(y=\zeta^2 x),\;(z=x),\;(z=\zeta x),\;(z=\zeta^2 x),\;(z=y),\;(z=\zeta y),\;(z=\zeta^2 y)$. We can easily verify that each line $C_i$ passes exactly through 4 of the points of $\mathcal P$ and that each point blown-up is the intersection point of exactly 3 of the $C_i$: this is called the \textit{dual of Hesse configuration}.\footnote{Hesse configuration itself is not interesting for our purposes: since it contains exactly 9 points (and 12 lines), the surface obtained by blowing up these points has finitely many non-equivalent real structures by Theorem \ref{first-thm}.}\\
\indent Note that $\displaystyle \left(X,\frac{1}{3}\sum_{i=1}^9 \hat{C_i}\right)$ is a KLT Calabi-Yau pair because:
\begin{itemize}% [label=$\bullet$,font=\small]
\item $X$ is smooth;
\item $\displaystyle \frac{1}{3}\sum_{i=1}^9 \hat{C_i}$ has simple normal crossings and its coefficients are < 1;
\item $\displaystyle -K_X=\frac{1}{3}\sum_{i=1}^9 \hat{C_i}$.
\end{itemize}
\indent Now, let us give some results about $\Aut X$. Firstly, one can show that if a line $D$ passes through one of the points $[1:0:0]$, $[0:1:0]$, $[0:0:1]$ and one of the $[1:\zeta^i:\zeta^j]$, then $D$ is one of the $C_i$: for example, if $D$ passes through $[1:0:0]$ and one of the $[1:\zeta^i:\zeta^j]$, then, in the affine chart $(x\neq 0)$ of $\P^2$, we have $D=(z=\zeta^{j-i}y)$.\\
\indent We claim that \fbox{$\Aut^{\#} X=\{\Id\}$}: since all the points blown-up belong to $\P^2$, it suffices to check that there does not exist any line passing through at least 11 of the 12 points of $\mathcal P$ (in fact, $\Aut^{\#}X$ is non-trivial if and only if $\mathcal P$ does not contain 4 points in general position, i.e. if and only if all the points of $\mathcal P$ are collinear except maybe only one of them). But if $D$ was such a line, then it would necessarily pass through one of the points $[1:0:0]$, $[0:1:0]$, $[0:0:1]$ and one of the $[1:\zeta^i:\zeta^j]$: thus, $D$ would be one of the $C_i$. Since none of the $C_i$ passes through 11 of the 12 points blown-up, we see that $D$ does not exist: this proves the claim.\\
\indent By the end of the example of \cite[\S 2]{totaro}, we have (denoting $E:=\C/\Z[\zeta]$):
\begin{center}
\fbox{$\displaystyle \Aut X \simeq \Aut^* X = (\Z/3\Z)^2\rtimes \frac{GL_2(\Z[\zeta])}{\Z/3\Z} = \Aut ((E\times E)/(\Z/3\Z))$}
\end{center}
\indent Finally, we want to show that $\Aut X$ contains a subgroup isomorphic to $\Z*\Z$:
\begin{itemize}[label=$\bullet$,font=\small]
\item it is well-known that $SL_2(\Z)$ contains finite index subgroups isomorphic to $\Z*\Z$ (for example, $\displaystyle S:=\left\langle\begin{bmatrix}
1 & 2\\ & 1
\end{bmatrix},\begin{bmatrix}
1 & \\2 & 1
\end{bmatrix}\right \rangle$ has index 12, cf. \cite[II.25]{de-la-harpe}) ;%(\attention\, référence ?) 
\item $GL_2(\Z[\zeta])$ acts on $E\times E = \C^2/(\Z[\zeta]^2)$ by matrix product and $\displaystyle \frac{GL_2(\Z[\zeta])}{\Z/3\Z}$ is the quotient by the subgroup generated by $\zeta.I_2$. Clearly, two elements of $SL_2(\Z)$ (or even $GL_2(\Z)$) are never equal modulo $\langle \zeta.I_2\rangle$ so $SL_2(\Z)$ injects into $\displaystyle \frac{GL_2(\Z[\zeta])}{\Z/3\Z}$: this concludes the proof.
\end{itemize}
\end{example}

\begin{example} {\bf (\cite[6.10]{dolgachev-zhang})} \label{exemple-dolgachev} As promised, we now describe the example of a rational surface for which the finiteness problem for real forms remains open. \\%\textbf{Pourquoi n'est-elle pas KLT Calabi-Yau ?}\\
Let $L_1,\dots,L_5$ be five lines in general linear position in $\P^2$. For $i,j\in\intl 1;4\intr$, we denote by $p_{ij}$ the intersection point of $L_i$ and $L_j$. Let us fix a cubic $C_3$ passing through the points $p_{ij}$ and intersecting $L_5$ at three distinct points $q_1$, $q_2$, $q_3$. Finally, let $a$ be another point of $L_5$.\\%\textbf{\attention\, Faire une figure en couleur !} \\
%\begin{figure}
%%{\hspace{-.5cm}
%%\begin{flushleft}
%\includegraphics[scale=.5]{Cubiques_Dolgachev_1.pdf}%}
%\caption{Dolgachev's example}
%\end{figure}
%%\end{flushleft}
We consider the blow-up $X$ of $\P^2$ at the 10 points $p_{ij}$, $q_k$ and $a$: it is a nodal Coble surface since $|-K_X|=\varnothing$ and $|-2K_X| = \{C_6:= R_1+R_2+R_3+R_4+2R_5\}$, where the $R_i$'s are the strict transform of the $L_i$'s in $X$ (note that $X$ is nodal since $R_1$,\dots,$R_4$ are $(-2)$-curves).\\
In \cite{dolgachev-zhang}, it is claimed that $\Aut X$ has infinitely many orbits on the set of (-1)-curves of $X$ but, in a private communication, Dolgachev explained me that there is a gap in the proof of this fact (more precisely, the elements of the group $G$ constructed in \textit{op. cit.} cannot be lifted to the double covering $S(A)$ of $X$ ramified along $R_1+\dots+R_4$). Note that if it were true, this would show that $X$ does not contain a divisor $\Delta$ such that $(X,\Delta)$ is a KLT Calabi-Yau pair. For, if such a divisor existed, then Cone Theorem would imply that $\Aut X$ has finitely many orbits on the extremal rays of the nef cone of $X$ and this would be true also for its dual cone, which is the cone of curves of $X$ (cf. \cite[4.1]{looijenga}): this is absurd because $(-1)$-curves form an $\Aut X$-invariant subset of the set of extremal rays of $\bar{NE}(X)$.\\
However, note that $\left(X,\dfrac{1}{2}C_6\right)$ is a log-canonical Calabi-Yau pair since $\dfrac{1}{2}C_6 = \dfrac{1}{2}(R_1+R_2+R_3+R_4)+R_5$ has clearly simple normal crossings, has coefficients $\leq 1$ and satisfies the condition $K_X+\dfrac{1}{2}C_6 \equiv 0$.\\
%\textbf{\attention\, Présence d'entropie ? (cf. Grivaux 1.1 + Prop. \ref{prop-coble} pour caractériser les fibrations elliptiques sur $X$) Et d'un $\Z*\Z$ ?}\\
%\textbf{Idée: relever $G_V$ et $G_V'$ sur $\Aut X$ et essayer d'utiliser le Lemme du ping-pong (si ça marche) pour voir s'ils engendrent un produit libre.}
\end{example}

%Let P be a convex polyhedron in $U^n$ . A cusp point of P is an ideal point c of P for which there is an open neighborhood N of c in $\hat{E^n}$ such that the intersection of the closures in $\bar{U^n}$ of all the sides of P that meet N is c. If c is a cusp point of P , then the cusp of P incident with c is the union of all the sides of P incident with c. For example, the two vertical sides of the polyhedron P in Example 1 form a cusp of P with ∞ its cusp point.
%Likewise, the points −1, 0, and 1 are cusp points of P . Suppose that c is a cusp point of P . Then there is a horosphere $\Sigma$ based at c such that $\Sigma$ meets just the sides of P incident with c. By Theorem 6.4.5, the set L(c) = $\Sigma \cap P$ is a Euclidean convex polyhedron called the link of c in P

\section{Appendix} \label{appendix-section}
In this appendix, we provide a detailed proof of Lemma \ref{ratcliffe-modified}, i.e. a detailed inspection of all the proofs of the results used by \cite{ratcliffe} in the proof of \cite[12.4.5]{ratcliffe} in order to replace $\mathcal H^n$ by a closed convex subset $N$. We recall here the statement of Lemma \ref{ratcliffe-modified}: \vspace{0.6cm}\\
\textbf{Lemma \ref{ratcliffe-modified}.} \textit{Let $\Gamma$ be a discrete subgroup of $\Isom(\mathcal H^n)$, $L(\Gamma)$ the set of limit points of $\Gamma$ in $\bar{\mathcal H^n}$, $C(\Gamma)$ the convex hull of $L(\Gamma)$ in $\bar{\mathcal H^n}$ and $N$ a $\Gamma$-invariant closed convex subset of $\mathcal H^n$.\\
If the action of $\Gamma$ on $N$ has a finitely sided polyhedral fundamental domain $P$, then there exists a finite union (maybe empty) $V_0$ of horocusp regions with disjoint closures such that $(P\cap C(\Gamma))\setminus V_0$ is compact.}
\vspace{.6cm}

%\textit{
In what follows, all numbers like (12.4.2) refer to \cite{ratcliffe}. Moreover, when some notations are undefined, please consider they are the same as in \cite{ratcliffe}, \emph{mutatis mutandis}. Finally, when some results cited in the diagram page \pageref{diagramme-horoboules} are not cited in the text below, then these are general results which apply to our case either without any change, or changing only $\mathcal H^n$ into $N$.%}
\\

Some remarks are widely used below so we gather them here:
\begin{itemize}%[label=$\bullet$,font=\small]
\item if $P_0$ is a fundamental polyhedron of the action of $\Gamma$ on $\mathcal H^n$, then $P=P_0\cap N$ is a fundamental polyhedron of the action of $\Gamma$ on $\mathcal H^n$% (this is used in \textbf{(12.3 Lemma 2)} for example)
\item by (6.6.10, 8.5.7), like $\mathcal H^n$, a closed convex subset $N$ of $\mathcal H^n$ is a proper geodesically connected and geodesically complete metric space\footnote{A metric space is \textbf{proper} or \textbf{finitely compact} if every bounded closed subset of it is compact.}: indeed, $N$ is proper as a subspace of the proper metric space $\mathcal H^n$, $N$ is geodesically complete, as it is complete, and it is geodesically connected, since it is convex. %Hence, the action of $\Gamma$ on $N$ has a locally finite fundamental domain by \cite[5.3.5, 6.6.13]{ratcliffe} since $\Gamma$ is discrete and thus acts properly discontinuously on $\mathcal H^n$ by Lemma \ref{proper-action-discrete};
From this fact, we can deduce that the action of $\Gamma$ on $N$ has a (locally finite) exact convex fundamental polyhedron, e.g. a Dirichlet polyhedron, by (5.3.5, 6.6.13) and (6.7.4 (2)) since the group is discrete (and hence acts properly discontinuously, cf. Lemma \ref{proper-action-discrete}) and since there is a point $a\in N$ whose stabilizer $\Gamma_a$ is trivial (by (6.6.12)).
\item if $\Gamma$ is a discrete group of isometries of $\mathcal H^n$ (seen as Poincaré half-space), then the stabilizer $\Gamma_{\infty}$ of the point at infinity induces a discrete subgroup of $\Isom(\R^{n-1}) = \Isom(\partial\mathcal H^n\setminus\{\infty\})$. By (5.4.6), there is a $\Gamma_{\infty}$-invariant affine subspace $Q$ of $\R^{n-1}$ of dimension $m\leq n-1$ and $\Gamma_{\infty}$ is a finite extension of a $\Z^m$. By (7.5.2), $\Gamma_{\infty}$ is a crystallographic isometry group of $\R^m\simeq Q$, i.e. $Q/\Gamma_{\infty}$ is compact
\item for the other points, it is a question of replacing $\mathcal H^n$ by $N$ and checking that everything remains true (sometimes by using convexity and/or closedness of $N$ in $\mathcal H^n$).%\vspace{-\baselineskip}
\end{itemize}

\textbf{(12.3.7)}: $\Gamma$ is a discrete subgroup of $\Isom(\mathcal H^n)$ so we can define "limit point", "bounded parabolic point"... with regard to its action on the whole space $\mathcal H^n$. Note that $a$ is a limit point if and only if $\exists (g_i)\in\Gamma^{\N}, \forall x\in\mathcal H^n, g_i(x)\xrightarrow[i\to +\infty]{} a$: in particular, if $x\in N$, then $\forall i, g_i(x)\in N$. The rest of the proof can be followed, except that we can check that the geodesic ray $R_i$ is contained in $N$.\\

%\textbf{(12.4.1)} General result for geometrically finite convex polyhedra of $\mathcal H^n$.\\% (en particulier ceux qui sont finis comme $P$).\\

%\textbf{(12.4 Lemma 3)} General result about convex polyhedra of $\R^n$.\\

%\textbf{(12.4.2)} This result is not useful if $P$ is finitely-sided.\\%est fini. D'ailleurs, avec cette hypothèse, on peut simplifier quelque peu la preuve de 12.4.3.\\

\textbf{(12.4.3)} Firstly, note that (12.4.2) is not necessary for our purposes since $P$ is finitely-sided. We can make the same reasoning with $N$ instead of $\mathcal H^n$: if $P$ is a fundamental polyhedron of $\Gamma$ acting on $N$, then $\{g(P)|\;g\in\Gamma\}$ is an exact tessellation of $N$ and $\{\nu g(P)|\;g\in\Gamma\} = \mathcal T$ is an exact tessellation of $\nu(N)\subseteq \R^{n-1}$. But $\displaystyle \bigcup_{g\in\Gamma} g(P) = N$ so $U\subseteq \nu(N)$. Since $U$ is an open closed subset of $\R^{n-1}$ and $U\subseteq \nu(N)$,  we see that $U$ is open and closed in the non-empty connected space $\nu(N)$ so that $U=\nu(N)$.\\

\textbf{(12.4.4)} The beginning of the proof remains valid: it shows that if $x\in\bar{P}\cap L(\Gamma)$, where $\bar{P}$ is the closure of $P$ in $\mathcal H^n$, then the stabilizer $\Gamma_x$ is infinite and elementary of parabolic type (cf. \cite[\S 5.5]{ratcliffe}). Of course, $\mathcal T$ is an exact tessellation of $\nu(N)$ instead of $\R^{n-1}$. If $c\in N$ is a cusp point of $\Gamma$, then $U(Q,r)\cap N\neq \emptyset$ because $U(Q,r)$ is a neighborhood of $c$: thus it suffices to replace $U(Q,r)$ by $N\cap U(Q,r)$ in the end of the proof to conclude.\\

%\textbf{(12.4 Cor. 3)} Direct corollary of 12.3.7, 12.4.1 and 12.4.4.\\

\textbf{(12.4.5)} %Une \textit{région horocuspée} est une horoboule ouverte $B$ basée en $a\in\partial \mathcal H^n$ telle que $\forall g\in\Gamma\setminus \Gamma_a,\;gB\cap B=\emptyset$. Un \textit{horocusp} est l'image d'une région horocuspée dans le quotient par $\Gamma$. Ceci étant dit,
Firstly, we note that (12.4 Cor. 3) is a direct corollary of (12.3.7), (12.4.1) and (12.4.4) and that $\bar{P}$ is the closure of $P$ in $\mathcal H^n$.
It suffices to replace:
\begin{itemize}
\setlength\itemsep{.25em}
\item "$\Gamma$ is geometrically finite" by "the fundamental polyhedron of $\Gamma$ on $N$ is finitely-sided" (see (\S 12.4, Example 1));
%\item $M=B^n/\Gamma$ by $\pi(N) = N/\Gamma$ 
\item in view of the statement of our Lemma \ref{ratcliffe-modified}, all the statements made in the proof of (12.4.5) concerning $\pi$, $V$, $M$ are useless for our purposes and all we need is $\displaystyle V_0 := \bigcup_{i=1}^m B_i$
\end{itemize}
and we have to note that $K$ is a closed subset of $B^n$ included in the closed subset $N$ (since $P\subseteq N$) hence $K$ is a closed subset of $N$.

\setlength{\fboxsep}{1.5mm}
\begin{landscape}
\begin{center}
\begin{figure}

\begin{tikzpicture}[scale=1.4]
\tikzstyle{fleche}=[->,thick,>=latex]
\node (A) at (0,0) {\fbox{12.4.5 (1$\Rightarrow$2)}};

\setlength{\fboxrule}{2pt}
\node (B) at (-5,1) {\fbox{12.3.5}};
\node (D) at (5,1) {\fbox{12.3.Lemma 2}};
\node (J) at (-6,2) {\fbox{12.3.Lemma 1}};
\setlength{\fboxrule}{0.5pt}
\node (C) at (0,1) {\fbox{12.4.Cor.3}};

\node (F) at (-3.5,2) {\fbox{12.3.Cor.2}};
\node (G) at (-1,2) {\fbox{12.4.4 (1$\Rightarrow$2$\Rightarrow$3)}};
\setlength{\fboxrule}{2pt}
\node (H) at (1,2) {\fbox{12.4.1}};
\setlength{\fboxrule}{0.5pt}
\node (I) at (4.5,2) {\fbox{12.3.7}};

\setlength{\fboxrule}{2pt}
\node (M) at (-6,4) {\fbox{5.4.6}};
\node (N) at (-4.5,4) {\fbox{7.5.2}};
\setlength{\fboxrule}{0.5pt}
\node (O) at (0,4) {\fbox{12.4.3}};
\node (Q) at (-1.5,3) {\fbox{12.3.4}};

\setlength{\fboxrule}{2pt}
\node (R) at (1.5,4) {\fbox{6.4.2}}; %($\maltese$)
\node (S) at (3,4) {\fbox{6.2.6}};%($\maltese$)
\node (U) at (4.5,4) {\fbox{6.4.4}};%($\maltese$)
\node (V) at (6,4) {\fbox{6.4.5 }};%($\maltese$)
\node (W) at (8,4) {\fbox{12.3.Lemma 3}};
\node (T) at (9.75,5) {\fbox{6.6.12}};
\node (Y) at (11.25,5) {\fbox{6.6.13}};
\setlength{\fboxrule}{0.5pt}
\node (X) at (10.5,4) {\fbox{6.7.Lemma 1}};

%lettres restantes E K L P Z

\node (E) at (-1.5,6) {\fbox{6.8.5}};
%\node (K) at (0,6) {\fbox{12.4.2}};%(*)
\setlength{\fboxrule}{2pt}
\node (L) at (2,6) {\fbox{12.4.Lemma 3}};
\node (P) at (2.25,5) {\fbox{6.3.13}};%($\maltese$)
\setlength{\fboxrule}{0.5pt}
\node (Z) at (-1.5,5) {\fbox{6.8.2}};
\setlength{\fboxrule}{2pt}
\node (A1) at (-3,5) {\fbox{6.4.1}};
\setlength{\fboxrule}{0.5pt}

\setlength{\fboxrule}{2pt}
\node (B1) at (-2,7) {\fbox{6.6.8 }};
\node (C1) at (-0.75,7) {\fbox{6.8.1}};
\setlength{\fboxrule}{0.5pt}
\node (D1) at (-3,6) {\fbox{6.7.5}};
\setlength{\fboxrule}{2pt}
\node (H1) at (-5.5,5) {\fbox{6.6.14}};
\node (K1) at (-5.5,7) {\fbox{6.6.12}};
\node (F1) at (-3,4) {\fbox{6.4.3}};%($\maltese$)
\node (G1) at (-1.5,4) {\fbox{12.3.3}}; %\circledast
\setlength{\fboxrule}{0.5pt}
\node (E1) at (-4.5,6) {\fbox{6.7.4}};

\draw[fleche] (B) -- (A);
\draw[fleche] (C) -- (A);
\draw[fleche] (D) -- (A);
\draw[fleche] (B) -- (F);
\draw[fleche] (J) -- (F);
\draw[fleche] (F) -- (G);
\draw[fleche] (G) -- (C);
\draw[fleche] (H) -- (C);
\draw[fleche] (I) -- (C);

\draw[fleche] (M) -- (G);
\draw[fleche] (N) -- (G);
\draw[fleche] (O) -- (G);
\draw[fleche] (Q) -- (G);

\draw[fleche] (S) -- (I);
\draw[fleche] (R) -- (I);
\draw[fleche] (U) -- (I);
\draw[fleche] (V) -- (I);
\draw[fleche] (W) -- (I);
\draw[fleche] (X) -- (I);

\draw[fleche] (T) -- (X);
\draw[fleche] (Y) -- (X);

\draw[fleche] (I) -- (O);
\draw[fleche] (H) -- (O);
\draw[fleche] (E) -- (O);
\draw[fleche] (L) -- (O);
\draw[fleche] (P) -- (O);
\draw[fleche] (Z) -- (O);
\draw[fleche] (A1) -- (O);
\draw[fleche] (A1) -- (Z);

\draw[fleche] (Z) -- (E);
\draw[fleche] (A1) -- (E);
\draw[fleche] (B1) -- (E);
\draw[fleche] (C1) -- (E);
\draw[fleche] (D1) -- (E);
\draw[fleche] (E1) -- (D1);
\draw[fleche] (F1) -- (Q);
\draw[fleche] (G1) -- (Q);

\draw[fleche] (H1) -- (E1);
\draw[fleche] (K1) -- (E1);

\end{tikzpicture}
\caption{We framed with bold lines the "initial" results, i.e. those whose proof does not require anything else that standard definitions and results (in topology, group theory, etc.) or those whose statement can be adapted to our case without examining their proof and the results used by it.}\label{diagramme-horoboules}
\end{figure}
\end{center}
\end{landscape}

\noindent \textbf{Acknowledgements.} The author is grateful to Frédéric Mangolte for asking him this question, and also for his advice and help. We want to thank Julie Déserti, Igor Dolgachev, Viatcheslav Kharlamov, Stéphane Lamy and Burt Totaro for useful comments, discussions or emails.

\bibliography{Biblio.bib}
\end{document}